\title{Groups which are metrically weakly sofic with respect to word norms}
\author{Aleksander Ivanov 
}
\theoremstyle{plain}
\newtheorem{theorem}{Theorem}[section]
\newtheorem{proposition}[theorem]{Proposition}
\newtheorem{corollary}[theorem]{Corollary}
\newtheorem{lemma}[theorem]{Lemma}
\newtheorem{remark}[theorem]{Remark}
\newenvironment{proof}{\addvspace{8pt plus 2pt minus 2pt}\noindent\emph{Proof. }}
  { \begin{flushright}$\blacksquare$\par\addvspace{8pt plus 2pt minus
2pt}\end{flushright}}
\theoremstyle{definition}
\newtheorem{definition}[theorem]{Definition}
\theoremstyle{remark}
\begin{document} 
\topmargin = 12pt
\textheight = 630pt 
\footskip = 39pt 

\maketitle

\begin{quote}
{\bf Abstract} 
We consider metric versions of weak soficity, LEF and residual finiteness. 
The main results of the paper extend Glebsky and Rivera's characterization of weak soficity to the case of normally finitely generated groups with word metrics. 
Metric LEF and residual finiteness are also characterized in this class.  
We deduce that the free group $\mathsf{F}_2$ is not metrically weakly sofic with respect to its standard invariant word norm. 
\\ 
{\bf 2010 Mathematics Subject Classification}: 20A15, 20E05, 20E26, 20F69.\\ 
{\bf Keywords}: Metric groups, weak soficity, word norms, profinite topology. 
\end{quote}


%

%
%

\section{Introduction} 

Let $G$ be a group and let $\Lambda$ be a closed convex subset of $[0,\infty)$ so that $0\in \Lambda$ (the case $\Lambda = [0,\infty )$ is possible). 
\begin{definition} \label{lf}
{\em A function $\ell:G\rightarrow \Lambda$ is called a {\em pseudo-length function (or pseudo-norm) } if for all $g$ and $h \in G$ \\ 
(i) $\ell(1) = 0 $; \\ 
(ii) $\ell(g)=\ell(g^{-1})$; \\
(iii) $\ell(gh)\le \ell(g) + \ell(h)$. \\ 
A {\em length function (or norm)} is a pseudo-length function which satisfies:  
\begin{quote} 
(i') for all $g\in G$ we have $\ell(g) = 0 $ if and only if $g=1$.   
\end{quote} 
} 
\end{definition} 
If $\ell$ is a $\Lambda$-(pseudo)-norm on $G$ then we say that $(G,\ell )$ is a {\em (pseudo) normed group}. 
The pseudo-norm $\ell$ is {\em invariant} if  $\ell(h^{-1} gh) = \ell(g)$ for all $g,h\in G$. 
In this case, it defines an invariant pseudo-metric by $d_{\ell}(g,h)=\ell(gh^{-1})$. 
Thus 
\[ 
\forall x,y,z (d_{\ell}(z \cdot x, z\cdot y) = d_\ell (x,y) = d_\ell (x\cdot z ,y\cdot z)). 
\] 
It becomes a metric if $\ell$ is a length function. 
On the other hand if $d$ is an invariant (pseudo)-metric on $G$ then the function $d(x,1)$ is an invariant (pseudo)-norm. 
In order to simplify notation it is convenient to work with norms instead of metrics. 
However there are situations when the word "metric" looks more appropriate than "norm".   
In particular we prefer "metric weak soficity" to "normed weak soficity".

In order to introduce the main topic of the paper we remind the reader the following definition. 

\begin{definition} \label{DefGR} 
{\em  (\cite{GR})
A group $G$ is called {\em weakly sofic }   
if there is a number $r\in \mathbb{R}$ such that for every 
finite subset $F\subseteq G$ and every $\varepsilon > 0$ there is 
a finite invariantly normed group $(C,\ell_C )$ and an injective map 
$\phi : F\rightarrow C$ such that 
\begin{itemize} 
\item for any triple $h,g,hg\in F$ we have  
$\ell_C ( \phi (hg ) (\phi (h) \phi (g))^{-1})<\varepsilon$, 
\item if $1 \in F$ then $\ell_C (\phi (1))< \varepsilon$, 
\item if $g \not= 1$ then $\ell_C (\phi (g)) > r$.
\end{itemize} 
}
\end{definition} 

Let us generalize this definition to normed groups. 
This will give the main object of this paper. 

\begin{definition} \label{DefMetrAppr} 
{\em We say that a normed group $(G,\ell )$ is {\em metrically weakly sofic }  if for every finite subset $F\subseteq G$ 
and every $\varepsilon >0$ there is a finite normed group $(C,\ell_C )$ and an injective map 
$\phi : F\rightarrow C$ such that 
\begin{itemize} 
\item for any triple $g,h,gh \in F$ we have  
$\ell_C (\phi (gh)^{-1} \phi (g) \phi(h)  ) < \varepsilon$, 
\item if  $1 \in F$  then 
$\ell_C (\phi(1)) < \varepsilon$,   
\item if $g\in F$ then $| \ell (g) - \ell_C (\phi (g))| <\varepsilon$. 
\end{itemize}  
}
\end{definition} 
This definition is folklore. 
It was explicitly considered in \cite{Iv} and implicitly in some other papers, for example in \cite{Doucha} and \cite{NST}. 
Note that the number $r$ from Definition \ref{DefGR} is not appropriate in Definition \ref{DefMetrAppr}, since the norm $\ell (g)$ determines (up to $\varepsilon$) the value $\ell_C (\phi (g))$.  
We also mention that a metric group $(G,d)$ with $d\le 1$ is  
metrically weakly sofic with respect to the norm corresponding to $d$ if and only if it isometrically embeds into a metric ultraproduct of finite metric groups.  

There are metric/normed groups for which the question of metric weak soficity looks very interesting. 
For example it would be interesting to know how it fits to bounded normal generation (with norms as in \cite{DT}), in particular in the case of Chevalley groups, see  \cite{Ta}.  
We also mention locally compact groups with property PL (i.e. unbounded length is proper), see \cite{dC}. 
Our original motivation is due to \cite{Doucha}, \cite{Iv}, \cite{NST}. 

In this paper we will also study the natural subclasses of metrically weakly sofic groups: metrically residually finite groups and metrically LEF groups. 
They will be defined in Section 2.3. 

\subsection{Word norms}

Let $G$ be a group generated by a symmetric set $S \subset G$. 
The {\em word norm} of an element $g \in G$ associated with $S$ is defined as follows:
\[ 
|g| = \mathsf{min}\{ k \in \mathbb{N} \, | \, g = s_1 \cdot \ldots \cdot s_k , \mbox{ where }s_i \in S\} . 
\] 
Let now $S\subseteq G$ be symmetric and $\overline{S}$ denote the the smallest conjugacy invariant subset of $G$ containing $S$. 
Assume that $G$ is generated by $\overline{S}$ but not necessarily by $S$. 
The following definition is taken from \cite{BGKM}. 
The {\em word norm} of an element $g \in G$ associated with $\overline{S}$ is defined by: 
\[ 
\parallel g\parallel = \mathsf{min}\{ k \in \mathbb{N}\, | \, g = s_1 \cdot \ldots \cdot s_k
, \mbox{ where }s_i \in \overline{S} \} .
\] 
This norm is conjugacy invariant. 
If $S$ is finite and $G$ is generated
by $\overline{S}$ then we say that $G$ is {\em normally finitely generated}. 

Finitely generated groups with word norms are especially visible in (geometric) group theory, see \cite{BGKM}, \cite{BIP},  \cite{Karl}, \cite{LS}, \cite{Sha} and \cite{Trost}.  
For example when $G$ is generated by finitely many conjugacy classes then finiteness of its diameter for the appropriate word metric  implies boundedness of its diameter with respect to any invariant metric, \cite{BGKM}.  
Thus metric weak soficity in the case of word norms is natural for investigations. 
This leads to an interesting subclass of {\em abstract} finitely generated weakly sofic groups: those ones which are metrically weakly sofic with respect to a word norm associated with some tuple of generators. 
It is an easy exercise that finitely generated abelian groups are metrically weakly sofic with respect to word norms arising from (Maltsev) bases. 
In fact they are metrically residually finite (See 2.3). 
There are other examples in \cite{IS} (which are metrically LEF).     
These results support the following problem: 
\begin{itemize} 
\item {\em Describe metrically weakly sofic groups for word norms in major group-theoretic classes. } 
\end{itemize} 
In order to approach this problem we give a characterization of the metric weak soficity in terms of profinite topology. 
The following statement is the main result of the paper. 

\bigskip 
\noindent 
{\bf Theorem 4.1}.
{\em Let $\mathsf{F}$ be a finitely generated free group and $G= \mathsf{F}/N$. 
Assume that $S\subseteq \mathsf{F}$ is a finite symmetric set representing elements of $G$ that define an invariant word norm, say $\parallel \cdot \parallel_N$, in $G$. 

The normed group $(G, \parallel \cdot \parallel_N )$ is metrically weakly sofic if and only if for any finite sequence $g_1$, $g_2$,..., $g_k\in G$  of $\parallel\cdot\parallel_N$-norm 1 and $h_1$, $h_2$,..., $h_{l}$ from $N$ (possibly with repetitions), the closure of every product  
$g_{i_1}^{\mathsf{F}} \cdot g_{i_2}^{\mathsf{F}} \ldots \cdot g_{i_m}^{\mathsf{F}}\cdot h_{1}^{\mathsf{F}} \cdot h_{2}^{\mathsf{F}} \ldots h_{l}^{\mathsf{F}}$ is a subset of $\{ w\in \mathsf{F} |\parallel wN \parallel_N \le m \} N$. 
}

\bigskip 
\noindent 
This theorem can be viewed as a metric version of the well-known theorem of Glebsky and Rivera characterizing weakly sofic groups \cite{GR}. 
Using this theorem we give some important information concerning the question formulated above: 
the free group $(\mathsf{F}_2 ,\parallel \cdot \parallel )$ where $\parallel \cdot \parallel$ corresponds to the free base, is not metrically weakly sofic (see Corollary \ref{Fnmws}. 
\footnote{Some other results concerning this question  can be found in \cite{IS}}
).

In Sections 3 and 5 we give theorems which characterize metric residual finiteness and metric LEF in terms of profinite topology. 
We also discuss a number of questions that arise in our approach.  

\section{Length functions on a group} 

This is a preliminary section. 
When we consider a normed group $(G,\ell )$, it is usually assumed that the norm is invariant. 
A free group with the standard length function $|\cdot |$ with respect to a free basis will be the only exception to this rule. 
Just in case in Section 2.1 we do not make the general invariantness assumption.  

\subsection{Standard properties of norms}

The following lemma is obvious. 

\begin{lemma} \label{corr1}  
Let $H<G$. 
For any (invariant) norm $\ell$ 
(metric $d$) on $G$ the restriction of $\ell$ (resp. $d$) to $H$ is an (invariant) norm (resp. metric). 
\end{lemma}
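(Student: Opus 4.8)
The plan is to verify the three defining conditions of Definition \ref{lf} directly, using only that $H$, being a subgroup, contains $1$ and is closed under inverses and products. Denote by $\ell_0$ the restriction of $\ell$ to $H$; since $1\in H$ and $gh, g^{-1}\in H$ whenever $g,h\in H$, the function $\ell_0\colon H\to\Lambda$ is well defined. Condition (i) is immediate: $\ell_0(1)=\ell(1)=0$. For (ii), any $g\in H$ has $g^{-1}\in H$, so $\ell_0(g^{-1})=\ell(g^{-1})=\ell(g)=\ell_0(g)$. For (iii), if $g,h\in H$ then $gh\in H$, hence $\ell_0(gh)=\ell(gh)\le\ell(g)+\ell(h)=\ell_0(g)+\ell_0(h)$. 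Thus $\ell_0$ is a $\Lambda$-pseudo-norm on $H$. If $\ell$ is a norm, then for $g\in H$ with $\ell_0(g)=0$ we get $\ell(g)=0$, so $g=1$, giving (i'); hence $\ell_0$ is a norm. If $\ell$ is invariant, then for $g,h\in H$ we have $h^{-1}gh\in H$ and $\ell_0(h^{-1}gh)=\ell(h^{-1}gh)=\ell(g)=\ell_0(g)$, so $\ell_0$ is invariant.

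For the metric assertion, one notes that the pseudo-metric associated with $\ell_0$ is exactly the restriction of $d_\ell$ to $H\times H$: for $g,h\in H$ one has $gh^{-1}\in H$ and $d_{\ell_0}(g,h)=\ell_0(gh^{-1})=\ell(gh^{-1})=d_\ell(g,h)$. Consequently the restriction of $d_\ell$ to $H\times H$ is a pseudo-metric, it is a metric precisely when $d_\ell$ is (equivalently, when $\ell_0$ is a length function), and it is invariant when $d_\ell$ is, by the corresponding computation on $H$. No genuine obstacle arises: every instance of an axiom for $\ell_0$ or $d_{\ell_0}$ is simply a special case of the same axiom for $\ell$ or $d_\ell$, valid because $H$ is closed under the ambient group operations. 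This is why the lemma is recorded as obvious.
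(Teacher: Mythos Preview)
Your proof is correct and is precisely the routine verification the paper has in mind; the paper itself gives no argument beyond declaring the lemma ``obvious''. There is nothing to compare---you have simply spelled out the direct check of Definition~\ref{lf} that the author omitted.
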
 
Let $(G, \ell)$ be an (invariant) normed group, $g\in G$  and $r \in \Lambda$. 
The $r$-{\em ball of} $g$ is defined as follows: 
\[ 
B_r (g) = \{ h \in G \, | \, d_{\ell}(h,g) \le r \}. 
\] 
One can also consider the $(<r)$-{\em ball of} $g$: 
\[ 
B_{<r} (g) = \{ h \in G \, | \, d_{\ell}(h,g) < r \}. 
\] 
Then the family 
$\{ B_{<r}(g) \, | \, r\in (\Lambda \cap \mathbb{Q})\setminus \{ 0 \}, \, g\in G \}$ is a subbase of {\em the topology defined by} $\ell$. 
Thus, every normed group is also a topological group. 

The following statement is taken from \cite{ST} (Lemmas 2.1).  

\begin{lemma} \label{corr2} 
If $G$ is a group with an (invariant)  pseudo-norm $\ell$ and $H$ is a normal subgroup of $G$, then  
\[ 
\ell_{G/H} (gH )= \mathsf{inf} \{ \ell(gh): h\in H\} 
\] 
defines an (invariant) pseudo-norm on $G/H$. 
If $G$ is finite and $\ell$ is a norm, then $\ell_{G/H}$ is a norm too. 
\end{lemma}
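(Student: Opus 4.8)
The plan is to verify in turn that the formula is well defined on cosets, that it satisfies the three pseudo-norm axioms of Definition \ref{lf}, that invariance is inherited when $\ell$ is invariant, and finally that positivity (i') passes to the quotient when $G$ is finite and $\ell$ is a norm. The single device that makes each verification go through is the normality of $H$: it lets one slide elements of $H$ past an arbitrary $g\in G$ at the cost of replacing them by conjugates, which again lie in $H$. Concretely, for any $g\in G$ and $h\in H$ one has $hg = g(g^{-1}hg)$ with $g^{-1}hg\in H$, so that $\{hg : h\in H\} = \{gh : h\in H\}$ as sets; this identity and its variants will be used repeatedly.

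First I would check well-definedness: if $g_1 H = g_2 H$ then $\{g_1 h : h\in H\} = \{g_2 h : h\in H\}$ as sets, so the two infima agree, and this step needs no normality. Axiom (i) is immediate, since $1\in H$ and $\ell(1)=0$ force $\ell_{G/H}(H) = \mathsf{inf}\{\ell(h) : h\in H\} = 0$ (recall $\ell$ takes values in $\Lambda\subseteq[0,\infty)$). For axiom (ii) I would rewrite $\ell_{G/H}(g^{-1}H) = \mathsf{inf}\{\ell(g^{-1}h) : h\in H\}$ using $\ell(g^{-1}h)=\ell(h^{-1}g)$, and then apply the normality identity above to turn $\{\ell(h^{-1}g) : h\in H\}$ into $\{\ell(gk) : k\in H\}$, yielding $\ell_{G/H}(g^{-1}H)=\ell_{G/H}(gH)$.

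For the triangle inequality (iii) I would fix $h_1,h_2\in H$ and observe that $g h_1 g' h_2 = g g'((g')^{-1}h_1 g')h_2$ lies in the coset $gg'H$; subadditivity of $\ell$ then gives $\ell_{G/H}(gg'H)\le \ell(gh_1)+\ell(g'h_2)$, and taking the infimum over $h_1$ and $h_2$ separately produces the desired bound. Invariance is handled the same way: when $\ell$ is invariant I would write $k^{-1}gkh = k^{-1}(g\cdot khk^{-1})k$, apply $\ell(k^{-1}xk)=\ell(x)$, and use that $h\mapsto khk^{-1}$ permutes $H$, to conclude $\ell_{G/H}(k^{-1}gk\,H)=\ell_{G/H}(gH)$. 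The main (indeed only) obstacle is the last clause, and it is where finiteness is genuinely needed: to upgrade $\ell_{G/H}$ to a norm I must show $\ell_{G/H}(gH)=0$ implies $g\in H$. When $G$ (hence $H$) is finite the infimum is a minimum, attained at some $h_0\in H$ with $\ell(gh_0)=0$; since $\ell$ is a norm this forces $gh_0=1$, i.e.\ $g=h_0^{-1}\in H$. For infinite $G$ this argument breaks down, as the infimum need not be attained, which is precisely why the finiteness hypothesis appears.
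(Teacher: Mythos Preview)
Your proof is correct and complete: each of the pseudo-norm axioms, invariance, and the final positivity clause are verified by the standard direct arguments. The paper itself does not give a proof of this lemma but simply cites \cite{ST} (Lemma 2.1), so there is no in-paper argument to compare against; your verification is exactly the routine check one would expect.
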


\subsection{Metric homomorphisms and almost homomorphisms}

We now consider metric homomorphisms.

\begin{definition} \label{h} 
{\em Given two normed groups $(G_1 ,\ell_1 )$ and $(G_2 ,\ell_2 )$ we say that a map $\phi : G_1  \rightarrow G_2$ is a {\em metric homomorphism } if it is a group homomorphism and for every $h\in G_1$ we have 
$\ell_2 (\phi (h)) \le \ell_1 (h)$. 

If for every $h\in G_1$ the equality 
$\ell_2 (\phi (h)) = \ell_1 (h)$ 
holds, then we say that the homomorphism $\phi$ is {\em isometric}. 
}
\end{definition}

Note that in the case when norms $\ell_1$ and $\ell_2$ correspond to invariant metrics $d_1$ and $d_2$ then the condition $(\forall h)(\ell_2 (\phi (h)) \le \ell_1 (h))$ 
(resp. $\ell_2 (\phi (h)) = \ell_1 (h)$)
is equivalent to 
$(\forall g,h)(d_2 (\phi (g), \phi (h)) \le d_1 (g,h))$ (resp. $d_2 (\phi (g), \phi (h)) = d_1 (g,h)$). 

It is also worth noting that an isometric homomorphism of normed groups is necessary injective. 

We now introduce a metric version of almost homomorphisms from \cite{CSC}, p. 240. 

\begin{definition}  \label{mLEFm} 
{\em Given two groups $G_1$ and $G_2$ with (invariant) norms $\ell_1$ and $\ell_2$ respectively, let $D$ be a finite subset of $G_1$ and $Q$ be a finite subset of $\mathbb{Q} \cap \Lambda$ such that $0\in Q$. 
A map $\varphi :G_1 \rightarrow G_2$ is called a {\em $D$-$Q$-almost-homomorphism of normed groups $(G_1 ,\ell_1 )$ and $(G_2 , \ell_2 )$ } if $\varphi$ is injective on $D$ and   
\begin{itemize} 
\item every triple $h,g,hg\in D$ satisfies    
$\varphi (hg )=\varphi (h) \varphi (g)$, 
\item for any $g \in D$, any number $q\in Q$ and any symbol $\square \in \{ <\, , \, > \, , = \}$ we have   
\[ 
\ell_1 (g) \square q \Leftrightarrow \ell_2 (\varphi (g)) \square q.  
\]
\end{itemize}  
} 
\end{definition} 
This notion will be convenient in our arguments.

\subsection{Metric RF and metric LEF}

We emphasize that in general almost homomorphisms are not metric homomorphisms. 
In the following definition we amalgamate these notions. 
It gives interesting subclasses of metrically weakly sofic groups (when $\mathcal{C} = \mathcal{F}$, see below). 

\begin{definition} \label{mres}  
{\em 
Let $\mathcal{C}$ be a class of invariantly normed groups. 
A group $(G,\ell_G )$ with an invariant norm is called {\em (metrically) fully residually $\mathcal{C}$ }  if for every finite subset $D\subseteq G$ and every finite $Q \subset \Lambda \cap \mathbb{Q}$ with $0\in Q$ there is $(C,\ell_C )\in \mathcal{C}$ and a homomorphism $\varphi : G \rightarrow C$ 
(resp. metric homomorphism 
$\varphi: (G,\ell_G) \rightarrow (C,\ell_C )$) which is a $D$-$Q$-almost-homomorphism. 
}
\end{definition} 
Note that to be metrically fully residually $\mathcal{C}$ implies the property to be fully residually $\mathcal{C}$. 
When $\mathcal{C}$ is the class of all finite invariantly normed groups (denoted by $\mathcal{F}$), then we will use the the following notation for just defined notions. 
\begin{quote} 
RF for metric groups $=$ the property (the class) to be fully residually $\mathcal{F}$ as an invariantly normed  group. 

Metric RF $=$ the property (the class) to be metrically fully residually $\mathcal{F}$ for invariantly normed groups.  
\end{quote} 
It is easy to see that these classes of normed groups are subclasses of the class of metrically weakly sofic groups. 

We now define a wider subclass of metrically weakly sofic groups. 
In the beginning we remind the reader the following definition. 

\begin{definition} 
{\em (\cite{GV})
A group $G$ is called {\em LEF} 
if for every finite subset $D\subseteq G$ there is 
a finite group $C$ and an injective map 
$\phi : D\rightarrow C$ such that 
every triple $h,g,hg\in D$ satisfies 
$\phi (hg )=\phi (h) \phi (g)$. 
}
\end{definition} 
A generalization which will be studied below is as follows. 

\begin{definition}  \label{mLEF} 
{\em Let $\mathcal{C}$ be a class of normed groups (usually with invariant norms). 
A group $(G,\ell_G )$ with an invariant norm is called {\em  
locally embeddable into} $\mathcal{C}$ 
(shortly {\em metrically LE$\mathcal{C}$} ) if for every finite subset $D\subseteq G$ and every finite $Q \subset \Lambda \cap \mathbb{Q}$ with $0\in Q$ there is a normed group $(C,\ell_C )\in \mathcal{C}$ and a $D$-$Q$-almost-homomorphism  $\varphi : (G,\ell_G )\rightarrow (C,\ell_C )$. 
}
\end{definition} 

This definition is taken  from \cite{IS}. 
We will call the invariant normed group $(G,\ell_G )$ a {\em metrically LEF group} if it is metrically LE$\mathcal{C}$ where $\mathcal{C}$ is the class of all finite invariantly  normed groups $\mathcal{F}$. 
\begin{quote} 
Metric LEF $=$ metric LE$\mathcal{F}$. 
\end{quote}

\begin{remark} \label{norm-eq} 
{\em  It is worth noting that when in Definition \ref{mLEF} the values of $\ell_G$ or all possible $\ell_C$ belong to $\mathbb{Q}$ (in particular, when they are are integer-valued) the definition can be essentially simplified. 
The condition should be formulated as follows: for every finite $D\subset G$ there is a normed $(C,\ell_C )\in \mathcal{C}$ and an injective map 
$\phi : D\rightarrow C$ 
such that 
\begin{itemize} 
\item  every triple $h,g,hg\in D$ satisfies 
$\phi (hg )=\phi (h) \phi (g)$,  
\item for every $g \in D$ we have 
$\ell_G (g) = \ell_C (\phi (g))$. 
\end{itemize}  
}
\end{remark}

\subsection{Norms for RF and metric LEF} 

It is obvious that every locally finite group with an invariant length function is metrically LEF. 
On the other hand it is easy to find such a group which is not RF.  
The following statement gives examples of infinite finitely generated fully residually $\mathcal{F}$ groups. 
Note that in this statement $G$ is given together with a natural topology on it and the norm $\ell$ corresponds to this topology. 

\begin{proposition} \label{refi}
Let $G$ be a finitely generated residually finite group. 
Then there is an invariant norm $\ell$ such that $\ell$ defines the profinite topology on $G$ and $(G, \ell )$ is metrically RF. 
\end{proposition}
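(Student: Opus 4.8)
The plan is to build $\ell$ directly from a descending chain of finite-index normal subgroups witnessing residual finiteness, so that $\ell$ simultaneously induces the profinite topology and makes the quotient maps $D$-$Q$-almost-homomorphisms. Fix a finite generating set $S$ of $G$ and a chain $G = N_0 \supseteq N_1 \supseteq N_2 \supseteq \cdots$ of finite-index normal subgroups with $\bigcap_i N_i = \{1\}$; since $G$ is finitely generated we may in addition assume each $N_i$ is characteristic (or at least that the chain is cofinal in the profinite topology, which is all we need). On each finite quotient $G/N_i$ put the word norm $\ell_i$ coming from the image of $S$ (this is an invariant integer-valued norm on a finite group), and then \emph{rescale and truncate}: replace $\ell_i$ by $\bar\ell_i := \min\{\ell_i(\cdot)/c_i,\, 1\}$ for a rapidly growing sequence of constants $c_i$ (e.g. $c_i = 2^i \cdot \mathrm{diam}(G/N_i)$), and define
\[
\ell(g) \;=\; \sum_{i=0}^{\infty} \bar\ell_i(gN_i).
\]
Each summand is an invariant pseudo-norm on $G$ pulled back from $G/N_i$ (using Lemma 2.4), the series converges because $\bar\ell_i(gN_i) \le 1/c_i$ once $i$ is large (indeed once $gN_i \ne 1$ the term is $\ell_i/c_i$, but the point is that for fixed $g$ all but finitely many terms vanish, as $g \in N_i$ eventually), so $\ell$ is a genuine invariant norm: $\ell(g) = 0$ forces $gN_i = 1$ for all $i$, hence $g = 1$.

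Next I would check the two required properties. \textbf{The topology.} A basic $\bar\ell_i$-ball of small radius around $1$ is contained in $N_i$, and conversely $N_i$ is contained in an $\ell$-ball of radius $\sum_{j > i} 1/c_j$ (since all terms with index $\le i$ vanish on $N_i$), so the $N_i$ are a neighbourhood basis at $1$ for the topology defined by $\ell$; as the $N_i$ are cofinal among finite-index subgroups, this is exactly the profinite topology. \textbf{Metric RF.} Given a finite $D \subseteq G$ and finite $Q \subset \mathbb{Q} \cap [0,\infty)$ with $0 \in Q$, choose $n$ so large that (a) the quotient map $\pi_n : G \to G/N_n$ is injective on $D$ (possible since $\bigcap N_i = 1$), and (b) the ``tail'' $\sum_{i > n} 1/c_i$ is smaller than the minimal gap between the finitely many values $\{\ell(g) : g \in D\} \cup Q$; in fact, by choosing the $c_i$ along the way to grow fast enough relative to the (computable, since $G/N_i$ is finite) diameters, one can arrange that $\ell(g)$ and $\ell_{G/N_n}(\pi_n(g))$ agree up to an error less than that gap for every $g \in D$. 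Here $\ell_{G/N_n}$ is the quotient norm from Lemma 2.5 applied to $(G,\ell)$ and $N_n$; since $G/N_n$ is finite it is a genuine invariant norm, so $(G/N_n, \ell_{G/N_n}) \in \mathcal{F}$. The map $\pi_n$ is a group homomorphism, automatically a metric homomorphism for the quotient norm, injective on $D$, and by the choice of $n$ it preserves every relation $\ell(g)\,\square\,q$ for $g\in D$, $q\in Q$, $\square\in\{<,>,=\}$. Hence $\pi_n$ is a metric-homomorphism $D$-$Q$-almost-homomorphism into $\mathcal{F}$, which is precisely the definition of metrically fully residually $\mathcal{F}$, i.e.\ metrically RF.

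The delicate point — and the step I expect to require the most care — is the simultaneous bookkeeping in the choice of the rescaling constants $c_i$: they must grow fast enough that (i) the series defining $\ell$ converges and the ``tails'' are negligible, (ii) the metric $\ell$ still induces the full profinite topology rather than a coarser one (so $c_i$ must not be so large that the $i$-th term becomes topologically invisible before $N_i$ is separated), and (iii) for each prescribed finite $D,Q$ one can find a single level $n$ at which the quotient norm $\ell_{G/N_n}$ matches $\ell$ on $D$ closely enough to respect all strict and non-strict inequalities in $Q$. Since each $G/N_i$ is finite, its diameter under the $S$-word norm is a concrete finite number, so a recursive choice $c_i = 2^i \prod_{j \le i}(1 + \mathrm{diam}(G/N_j))$ (or similar) handles (i) and (iii); verifying that it does not kill (ii) amounts to observing that each individual $\bar\ell_i$ still distinguishes points of $G/N_i$ from $1$, which it does because truncation at $1$ does not affect elements of small $\ell_i$-norm. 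Once the constants are pinned down, everything else is the routine verification sketched above.
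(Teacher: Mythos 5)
Your construction of $\ell$ as a weighted sum $\sum_i \bar\ell_i(gN_i)$ is fine as far as invariance, positivity and the topology are concerned (one slip: for $g\neq 1$ the terms do \emph{not} eventually vanish --- since $\bigcap_i N_i=\{1\}$ we have $g\notin N_i$ for all large $i$, so the terms are eventually \emph{nonzero}; convergence instead follows from $\bar\ell_i\le \mathrm{diam}(G/N_i)/c_i\le 2^{-i}$). The genuine gap is in the verification of metric RF. Definition \ref{mLEFm} requires, for every $g\in D$, $q\in Q$ and $\square\in\{<,>,=\}$, the \emph{biconditional} $\ell(g)\,\square\,q \Leftrightarrow \ell_C(\varphi(g))\,\square\,q$; in particular, whenever $\ell(g)$ happens to equal some $q\in Q$, the image must have norm \emph{exactly} $q$. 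Your argument only produces approximate agreement: with the quotient norm of Lemma \ref{corr2}, $\ell_{G/N_n}(gN_n)=\inf\{\ell(gh):h\in N_n\}$ lies between $\sum_{i\le n}\bar\ell_i(gN_i)$ and $\ell(g)$, and for $g\notin N_n$ it is in general strictly smaller than $\ell(g)$, because no choice of $h\in N_n$ makes the tail terms of $gh$ equal to those of $g$, and some choice may make them smaller. Since $Q$ is an arbitrary finite subset of $\mathbb{Q}$ and the values of your $\ell$ can perfectly well be rational, an adversarial $Q$ containing the exact value $\ell(g)$ for some $g\in D\setminus\{1\}$ defeats the ``error smaller than the minimal gap'' bookkeeping: the relation ``$=q$'' holds in $G$ but fails in $G/N_n$ (and a strict inequality appears instead), so $\pi_n$ is not a $D$-$Q$-almost-homomorphism. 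Controlling the $c_i$ cannot repair this, because the defect is not the size of the tail but the absence of exact equality.

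This is precisely the point where the paper's choice of norm does extra work: there $\ell(g)=\max\{p^{-s}\mid g\notin N_s\}$, an ultrametric-type norm whose values are determined by the single index where $g$ leaves the chain. For such $\ell$, if $K\cap N_s\subseteq\{1\}$ and $g\in K\setminus\{1\}$, then for every $h\in N_s$ one has $gh\notin N_{t}$ for the least $t$ with $g\notin N_t$, hence $\ell(gh)\ge\ell(g)$, so the quotient norm of Lemma \ref{corr2} agrees with $\ell$ on $K$ \emph{exactly}, and all three relations $<,>,=$ against any $Q$ are preserved automatically. To salvage your sum-type norm you would have to either prove exact agreement of $\ell$ with the quotient norms on the given finite set (which fails in general), or arrange that no nonzero value of $\ell$ is rational (a nontrivial extra constraint you neither state nor verify). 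As written, the proof of the metric RF property is incomplete at the ``$=$'' clause.
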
 

\begin{proof} 
Let $N_1 > N_2 > \ldots$ be a descending sequence of normal subgroups of finite index with $\bigcap N_i = \{ 1 \}$. 
It is a basis of neighborhoods of the identity with respect to the profinite topology. 

Let $p$ be any prime number. 
Define 
\[  
\ell (g) = \mathsf{max} \{ \frac{1}{p^s} \, | \, g\not\in N_{s}\}  .  
\] 
It is easy to see that $\ell$ is an invariant norm which defines the profinite topology. 
In order to verify that $(G, \ell )$ is metrically fully residually finite take any finite $K\subset G$. 
Let $N_s$ be chosen so that $K\cap N_s \subseteq \{ 1 \}$. 
It is easy to see that for every $g\in K$ we have  
$\ell (g) = \ell_{G/N_s}(gN_s )$ where the right side norm is defined as in  Lemma \ref{corr2}.
\end{proof}

Let $(G, \ell )$ be an  invariantly normed group.  
A natural question arises in the situation when the set of values of $\ell$ is a proper subset of $\Lambda$, for example $\Lambda = \mathbb{R}^+$ and $\mathsf{Rng} (\ell) = \mathbb{N}$.  
{\em Does metric LEF of $(G, \ell)$ imply that $(G, \ell )$ is metrically LEF with respect to the subclass 
consisting of finite invariantly $\mathsf{Rng}(\ell )$-normed groups?} 
The following proposition concerns one of the simplest cases. 

\begin{proposition} \label{RvsN} 
Assume that 
$\Lambda_0 = \Lambda \cap\mathbb{N}$.  
We also assume that if $\mathsf{sup} (\Lambda)$ exists then it belongs to $\Lambda_0$. 

Let $\ell$ be an invariant norm on a group $G$ and 
$\mathsf{Rng} (\ell )\subseteq \Lambda_0$.  
Assume that $(G,\ell )$ is metrically LEF (resp. RF as a normed group). 

Then $(G,\ell )$ is metrically LEF (resp. RF as a normed group) with respect to the subclass 
consisting of finite invariantly normed groups with norms having the set of values in $\Lambda_0$. 
\end{proposition}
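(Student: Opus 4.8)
The plan is to take any witness $(C,\ell_C)$ for the metric LEF (resp.\ RF) property of $(G,\ell)$ and replace $\ell_C$ by its pointwise ceiling $\lceil\ell_C\rceil$, which automatically has its values in $\Lambda_0$; the only preparation needed is to force the underlying almost-homomorphism to realise the norms on the prescribed finite set \emph{exactly}, so that rounding up leaves it unaffected there. Concretely, I would fix a finite $D\subseteq G$ and a finite $Q\subseteq\Lambda\cap\mathbb{Q}$ with $0\in Q$, and pass to $Q'=Q\cup\{\ell(g):g\in D\}\cup\{0\}$; since $\mathsf{Rng}(\ell)\subseteq\Lambda_0\subseteq\Lambda\cap\mathbb{Q}$, this is again a finite subset of $\Lambda\cap\mathbb{Q}$ containing $0$. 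Applying the hypothesis to $D$ and $Q'$ yields a finite invariantly normed group $(C,\ell_C)$ together with a $D$-$Q'$-almost-homomorphism $\varphi:(G,\ell)\to(C,\ell_C)$ (a group homomorphism $G\to C$ in the RF case); choosing, for each $g\in D$, the number $q=\ell(g)\in Q'$ and the symbol ``$=$'' in Definition \ref{mLEFm} gives $\ell_C(\varphi(g))=\ell(g)$ for every $g\in D$.

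Next I would set $\ell_C'(c)=\lceil\ell_C(c)\rceil$ for $c\in C$ and check that $(C,\ell_C')$ is a finite invariantly normed group with $\mathsf{Rng}(\ell_C')\subseteq\Lambda_0$. Indeed $\lceil 0\rceil=0$; the equalities $\ell_C(c^{-1})=\ell_C(c)$ and $\ell_C(h^{-1}ch)=\ell_C(c)$ transfer to $\ell_C'$, giving symmetry and invariance; the triangle inequality follows from $\lceil x+y\rceil\le\lceil x\rceil+\lceil y\rceil$ (the right-hand side is an integer which is $\ge x+y$); and $\lceil\ell_C(c)\rceil=0$ forces $\ell_C(c)=0$, hence $c=1$, because $\ell_C$ is a norm. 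Finally $\lceil\ell_C(c)\rceil\in\Lambda_0$: this is clear when $\Lambda$ is unbounded, and when $\sup\Lambda$ exists it lies in $\Lambda_0\subseteq\mathbb{N}$ by hypothesis, so $\ell_C(c)\le\sup\Lambda$ yields $\lceil\ell_C(c)\rceil\le\lceil\sup\Lambda\rceil=\sup\Lambda$.

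It then remains to verify that the \emph{same} $\varphi$ is a $D$-$Q$-almost-homomorphism into $(C,\ell_C')$. Injectivity on $D$ and multiplicativity on triples from $D$ are untouched by the change of norm. For $g\in D$ we have $\ell_C'(\varphi(g))=\lceil\ell(g)\rceil=\ell(g)$ because $\ell(g)\in\mathbb{N}$, so $\ell(g)\,\square\,q\Leftrightarrow\ell_C'(\varphi(g))\,\square\,q$ holds for every $q\in Q$ and $\square\in\{<,>,=\}$. In the RF case $\varphi$ is still a group homomorphism $G\to C$, so $(C,\ell_C')$ witnesses the property for $D$ and $Q$; and if one works with metric RF the estimate $\ell_C'(\varphi(h))=\lceil\ell_C(\varphi(h))\rceil\le\lceil\ell(h)\rceil=\ell(h)$ (using monotonicity of $\lceil\cdot\rceil$ and $\ell(h)\in\mathbb{N}$) shows $\varphi$ remains a metric homomorphism. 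Since $(C,\ell_C')$ belongs to the subclass of finite invariantly normed groups with values in $\Lambda_0$, this proves the proposition.

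I do not expect a substantive obstacle here; the two points that need care are (i) that the hypothesis ``$\sup\Lambda\in\Lambda_0$ (when it exists)'' is precisely what keeps the ceiling inside $\Lambda$, so it cannot be dropped, and (ii) that one must first promote the almost-homomorphism to realise the norms on $D$ exactly before rounding, since an approximate match of the form $|\ell(g)-\ell_C(\varphi(g))|<\varepsilon$ need not survive the operation $\lceil\cdot\rceil$.
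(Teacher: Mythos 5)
Your proof is correct and takes essentially the same route as the paper: the paper's modified norm $\ell'$ is exactly your pointwise ceiling $\lceil\ell_C\rceil$, applied after ensuring (via the exact-matching device of Remark \ref{norm-eq}, which you reproduce by enlarging $Q$ to contain the values $\ell(g)$, $g\in D$) that the norms agree exactly on the finite set, so rounding changes nothing there. Your one-line verification of the triangle inequality via $\lceil x+y\rceil\le\lceil x\rceil+\lceil y\rceil$ is in fact cleaner than the paper's inductive argument, but it is a difference of exposition rather than of method.
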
  
 
\begin{proof}
Take a finite $K\subset G$, a finite $Q\subset \Lambda \cap \mathbb{Q}$ with $0\in Q$ and a $K$-$Q$-almost-homomorphism $\varphi$ from $(G,\ell )$ to some finite $(G_0 ,\ell_0 )$ with an invariant norm. 
It can happen that 
$\mathsf{Rng} (\ell_0 ) \not\subseteq \Lambda_0$. 
In order to correct this let us define a function $\ell'$ on $G_0$ as follows. 
If $g\in G_0$ and $\ell_0 (g) \in \Lambda_0$ put $\ell'(g) = \ell_0 (g)$. 
Note, that in this way we have $\ell (g) = \ell_0 (g)$ for all $g\in \varphi (K)$, see Remark \ref{norm-eq}. 

When $\ell_0 (g) \not\in \Lambda_0$  
let $n$ be the maximal natural number which is less than $\ell_0 (g)$. 
Define $\ell'(g') =n+1$ for all $g'\in G_0$ with $n < \ell_0 (g')\le n+1$.  
It is clear that this definition preserves (i) (resp. (i')), (ii) of Definition \ref{lf} and invariantness. 

In order to see property (iii) of Definition \ref{lf} let us apply induction. 
Let $m$ be the first number from $\Lambda_0$ such that $\ell_0$ takes values from $[0,m] \setminus \mathbb{N}$. 
Thus, these reals belong to the open interval $(m-1,m)$. 
Since all values of $\ell_0$ that do not accede to $m-1$, belong to $\mathbb{N}$, one easily sees that the (triangle) inequality (iii) holds for $\ell'$ and for all $g$, $h$ with $\ell_0 (gh) \le m$. 

If $g$ and $h$ satisfy 
$\ell_0 (gh) \in (m,m+1]$ then $\ell' (gh) = m+1$. 
If $\ell_0 (g) \not=\ell'(g)$ then $\ell'(g) = [\ell_0 (g)]+1$ (and so is true for $h$).  
In particular $\ell'(gh) \le \ell'(g) + \ell'(h)$.  
The rest of this induction is clear. 

The case of RF is similar. 
\end{proof}  
 
It seems to us that the method used in this proof can work in many other situations. 
In fact, we do not know any example in which the natural version of Proposition \ref{RvsN} does not hold. 
For example, it is easy to see that the statement of this proposition holds in the case when $\Lambda_0$ is dense in $\Lambda$.

\section{Approximations of finitely generated normed groups with respect to word norms} 

\subsection{Plan for Sections 3,4,5} 
 
Let $G$ be a group normally generated by a symmetric set $S \subset G$ and let $\overline{S}$ denote the the smallest conjugacy invariant subset of $G$ containing $S$. 
As in Introduction the word norm of an element $g \in G$ associated with $\overline{S}$ is defined by: 
\[ 
\parallel g\parallel = \mathsf{min}\{ k \in \mathbb{N}\, | \, g = s_1 \cdot \ldots \cdot s_k
, \mbox{ where }s_i \in \overline{S} \}  . 
\] 
It is integer valued and invariant. 
The following general problem will be studied in Sections 3,4,5.
 
\begin{itemize} 
\item Assume that $G = \langle \overline{S} \rangle$ where $S$ is a (finite) symmetric subset of $G$. 
When is $(G, \parallel \cdot \parallel )$ metrically weakly sofic (metrically RF or metrically LEF)? 
\end{itemize} 
Some of our results do not depend on the assumption of finiteness of $S$. 
This assumption will be added into formulations when it would be essential. 
 
Let $\mathsf{F}$ be a finitely generated free group with a fixed free basis and $N$ be a normal subgroup of $\mathsf{F}$. 
We will view $G$ as $G=\mathsf{F}/N$ and the set $S$ as above will be viewed as a subset of $\mathsf{F}$ (i.e. representatives of cosets).  
Let $|\cdot |$ be the word length on $\mathsf{F}$ with respect to the free basis of $\mathsf{F}$. 
We denote by $B^{\mathsf{F},|\cdot |}_n (g)$ the $n$-ball of $g$ with respect to this length. 
When $S$ is a subset $\mathsf{F}$ such that the set of cosets $SN$ normally generates $\mathsf{F}/N$,  
we will consider $G= \mathsf{F}/N$ together with an invariant word norm $\parallel \cdot \parallel_N$ defined with respect to 
$\overline{SN} = S^{\mathsf{F}}N$. 
Note that the subgroup $\langle S^{\mathsf{F}}\rangle$ of $\mathsf{F}$ also has the natural invariant word norm defined by $S^{\mathsf{F}}$. 
We denote it by $\parallel \cdot \parallel$.  
When $g \in \langle S^{\mathsf{F}}\rangle$ let $B^{\mathsf{F},\parallel\cdot\parallel}_j (g)$ denote the $j$-ball of $g$ in 
$\langle S^{\mathsf{F}}\rangle$
(thus in $\mathsf{F}$) with respect to 
$\parallel \cdot \parallel$. 
We put $B^{\mathsf{F},\parallel \cdot\parallel}_0 (g) = \{ g \}$. 
It is clear that 
\[ 
\parallel gN \parallel_N = \mathsf{min}\{ \parallel g'\parallel \, | \, g'\in gN \} . 
\]  
Our aim is to describe the properties from the problem above in terms of the profinite topology of $\mathsf{F}$. 
In Section 3 we will study metric RF, in Section 4 - metric weak soficity and in Section 5 - metric LEF. 
In Section 4 we will show that $\mathsf{F}_2$ is not metrically weakly sofic as a metric group of the following form.  
\begin{quote} 
Throughout the paper in the case when $G$ is a free group with a free base  
$x_0 , \ldots , x_i ,\ldots$, $i< \mathsf{n}$, where $\mathsf{n} \in \mathbb{N} \cup \{ \omega\}$, we consider the invariant word norm $\parallel \cdot \parallel$ which is defined by  
\[ 
S=\{ x^{\pm 1}_0 , \ldots , x^{\pm 1}_i ,\ldots \, | \,  i< \mathsf{n} \}. 
\]
\end{quote} 
In the remaining part of this introduction we give three useful remarks. 

\begin{remark} 
{\em (see Section 2 of \cite{GR}) 
Closed sets in the profinite topology can be characterized by the following separability property: a set $X \subseteq G$ is closed in the profinite topology if and only if for any $g \not\in X$ there exists
a homomorphism $\rho$ from $G$ to a finite group such that 
$\rho (g) \not\in \rho (X)$.
}
\end{remark} 

\begin{remark} 
{\em Assume that $G$ is a LEF group and is normally generated by a finite $S\subset G$. 
Let $\parallel \cdot \parallel$ be the corresponding invariant norm. 
Note that the following weak form of metric LEF holds. 

{\em For every finite subset $D\subseteq G$ there is a finite normed group $(C,\ell_C )$ and an injective map $\phi : D\cup S \rightarrow C$ 
such that} 
\begin{itemize} 
\item {\em every triple $h,g,hg\in D\cup S$ satisfies}  
$\phi (hg )=\phi (h) \phi (g)$, 
\item  $\ell_C$ {\em is an invariant word norm generated by $\phi(S)$ and for every} $g \in D$, 
\[ 
\ell_C (\phi (g)) \le \parallel g\parallel.  
\]
\end{itemize}  
Indeed, for every $g\in D$ find its presentation as $g= s^{v_1}_1 \cdot \ldots \cdot s^{v_k}_{k}$ where 
$k=\parallel g \parallel$, $s_i \in S$. 
Extend $D\cup S$ by all $v_j$ and all sub-products appearing in these expressions. 
Let $D_1$ be the corresponding set. 
Using LEF find a finite group $C$ and an appropriate embedding $\phi : D_1 \to C$ with 
$C=\langle \phi(D_1) \rangle$. 
Let $\ell_C$ be the word norm generated by $\overline{\phi (S)}$ in $C$.   
}
\end{remark} 

This remark also holds for {\em residual finiteness of normed groups} where $\phi$ to be an abstract homomorphism from $G$ to a finite group.  

In the third remark we again arrive at the the situation when $G= \langle \overline{S} \rangle$ with the corresponding word norm but now without assumption of finiteness of $S$. 
Note that the equalities $B_1 (1) =\overline{S} \cup\{ 1\}$ and $B_n (1)= (B_1 (1))^n$ always hold in such a group. 
We now notice that the converse also holds. 

\begin{lemma} 
Let $(G, \ell )$ be a group with an invariant integer valued norm such that $B_n (1) \subseteq (B_1 (1))^n$ for all $n\in \omega$.  
Then $\ell$ is a word norm with respect to $B_1 (1)$. 
\end{lemma}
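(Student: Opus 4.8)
The plan is to verify directly that $\ell$ equals the word norm associated with the set $\overline{S}:=B_1(1) = \{ h\in G \mid \ell(h)\le 1\}$. First I would check that this set is legitimate input for the word-norm construction: it is symmetric since $\ell(h)=\ell(h^{-1})$, and it is conjugacy invariant since $\ell$ is invariant, so $\overline{B_1(1)}=B_1(1)$. Moreover $B_1(1)$ generates $G$: as $\ell$ is integer valued, every $g\in G$ has $\ell(g)=n$ for some $n\in\omega$, hence $g\in B_n(1)\subseteq (B_1(1))^n$ by hypothesis, so $g$ is a product of elements of $B_1(1)$. Write $\parallel\cdot\parallel$ for the resulting word norm; it is then defined on all of $G$.

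It remains to prove $\parallel g\parallel = \ell(g)$ for all $g\in G$, which I would do by the two obvious inequalities. For $\parallel g\parallel\le\ell(g)$: set $n=\ell(g)$; then $g\in B_n(1)\subseteq(B_1(1))^n$ exhibits $g$ as a product of $n$ elements of $B_1(1)$, so $\parallel g\parallel\le n=\ell(g)$. For $\ell(g)\le\parallel g\parallel$: write $g=s_1\cdots s_k$ with $k=\parallel g\parallel$ and each $s_i\in B_1(1)$, i.e.\ $\ell(s_i)\le 1$; then property (iii) of Definition \ref{lf} gives $\ell(g)\le\sum_{i=1}^k \ell(s_i)\le k=\parallel g\parallel$.

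Combining the two inequalities, $\ell=\parallel\cdot\parallel$, which is exactly the assertion. There is no genuine obstacle here; the one thing to keep an eye on is the degenerate possibility $B_1(1)=\{1\}$, but then the hypothesis forces $B_n(1)=\{1\}$ for all $n$, and since $\ell$ is integer valued this means $G=\{1\}$, a case already subsumed by the argument above, so no separate treatment is needed. (One may also note in passing that the reverse inclusion $(B_1(1))^n\subseteq B_n(1)$ always holds by the triangle inequality, so the hypothesis is in fact an equality, matching the remark preceding the statement.)
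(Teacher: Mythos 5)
Your proof is correct and uses the same ingredients as the paper's: the hypothesis $B_n(1)\subseteq (B_1(1))^n$ gives $\parallel g\parallel\le\ell(g)$, and the triangle inequality gives $\ell(g)\le\parallel g\parallel$. The paper merely packages this as an induction on $n$ showing $B_n(1)=(B_1(1))^n$ level by level, so your direct two-inequality version (with the extra care about symmetry, conjugacy invariance, and generation) is essentially the same argument, just stated more explicitly.
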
 

\begin{proof} 
If $g \in (B^H_1 (1)\cdot B^H_1 (1) )\setminus B^H_1 (1)$, then $\ell (g) \ge 2$.  
By the triangle inequality we have $\ell (g) = 2$. 
In particular $B^H_2 (1) = (B^H_1 (1))^2$. 
If $g \in (B^H_1 (1) )^3 \setminus (B^H_1 (1))^2$, then $\ell (g) \ge 3$.
By the triangle inequality we have $\ell (g) = 3$. 
The rest follows by easy induction. 
\end{proof}

\subsection{Metric RF} 

We now keep the assumptions of Section 3.1 and study the profinite topology of $\mathsf{F}$.

\begin{theorem} \label{mRF}
Let $G= \mathsf{F}/N$ and $\parallel \cdot \parallel_N$ be defined with respect to $S\subseteq \mathsf{F}$. 
Consider the following conditions. 
\begin{enumerate} 
\item The normed group $(G, \parallel \cdot \parallel_N )$ is RF as a normed group; 
\item The normed group $(G, \parallel \cdot \parallel_N )$ is metrically RF; 
\item For any $m\in \mathbb{N}$ the set 
$B^G_m (1) =\{ g \in G \, | \, \parallel g \parallel_N \le m \}$ 
is closed in the profinite topology of $G$;  
\item For any $m\in \mathbb{N}$ and any $w \not\in B^{\mathsf{F},\parallel \cdot\parallel}_m (1)N$ there is a homomorphism $\hat{\varphi}$ from $\mathsf{F}$ to a finite group such that $\hat{\varphi} (w) \not\in \hat{\varphi}(B^{\mathsf{F},\parallel \cdot \parallel}_m (1)N)$ and $\hat{\varphi} (N) = 1$. 
\end{enumerate} 
Then $2 \Rightarrow 3 \Rightarrow 4 \Rightarrow 1$, and 
assuming that $S$ is finite, $1 \Leftrightarrow 2$. 
\end{theorem}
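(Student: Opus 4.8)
The plan is to establish the cycle of implications $2\Rightarrow 3\Rightarrow 4\Rightarrow 1$ first, and then to close the loop under the finiteness hypothesis on $S$ by proving $1\Rightarrow 2$.

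For $2\Rightarrow 3$: fix $m$ and suppose $g\notin B^G_m(1)$, i.e.\ $\parallel g\parallel_N \ge m+1$. Take $D=\{g\}$ and $Q=\{0,m\}$; metric RF gives a metric homomorphism $\varphi:(G,\parallel\cdot\parallel_N)\to (C,\ell_C)$ with $C$ finite that is a $D$-$Q$-almost-homomorphism, so in particular $\ell_C(\varphi(g)) \square m \Leftrightarrow \parallel g\parallel_N\square m$ for $\square\in\{<,>,=\}$, whence $\ell_C(\varphi(g))>m$. Since $\varphi$ is a metric homomorphism, $\ell_C(\varphi(h))\le\parallel h\parallel_N \le m$ for every $h\in B^G_m(1)$, so $\varphi(g)\notin\varphi(B^G_m(1))$. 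By the separability characterization of closed sets in the profinite topology (the first Remark), $B^G_m(1)$ is closed.

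For $3\Rightarrow 4$: this is essentially the translation between the profinite topology of $G=\mathsf F/N$ and that of $\mathsf F$. The key observation is $B^{\mathsf F,\parallel\cdot\parallel}_m(1)N = \pi^{-1}(B^G_m(1))$, where $\pi:\mathsf F\to G$ is the quotient map, which follows from the displayed identity $\parallel gN\parallel_N = \min\{\parallel g'\parallel : g'\in gN\}$ at the end of Section~3.1. So if $w\notin B^{\mathsf F,\parallel\cdot\parallel}_m(1)N$ then $\pi(w)\notin B^G_m(1)$; closedness of $B^G_m(1)$ gives a homomorphism $\rho$ from $G$ to a finite group separating $\pi(w)$ from $B^G_m(1)$, and $\hat\varphi=\rho\circ\pi$ is a homomorphism from $\mathsf F$ to a finite group with $\hat\varphi(N)=1$ and $\hat\varphi(w)\notin\hat\varphi(B^{\mathsf F,\parallel\cdot\parallel}_m(1)N)$, as required.

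For $4\Rightarrow 1$: given a finite $K\subseteq G$ and a finite $Q\subset\mathbb Q\cap\Lambda$ with $0\in Q$, I would lift $K$ to a finite set $\tilde K\subseteq\mathsf F$ of representatives realizing the norms, set $m_0=\max\{\parallel g\parallel_N : g\in K\}$, and for each $g\in K$ and each relevant integer $m<\parallel g\parallel_N$ (finitely many, bounded by $m_0$) invoke condition $4$ with $w$ a representative of $g$ to get a homomorphism $\hat\varphi$ from $\mathsf F$ onto a finite group killing $N$ and separating $w$ from $B^{\mathsf F,\parallel\cdot\parallel}_m(1)N$. Since $Q$ is finite we only need finitely many such separations; take the diagonal product of all the resulting maps, which still kills $N$, hence factors through a homomorphism $\varphi:G\to C$ with $C$ finite. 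On $C$ put the word norm generated by the image of $S^{\mathsf F}$ (equivalently push forward $\parallel\cdot\parallel_N$ as in Lemma~\ref{corr2}); one checks $\ell_C(\varphi(g))=\parallel g\parallel_N$ for all $g\in K$ using that the separations pin down the norm from above (the word norm image can only decrease) and from below (the separation for the integer $m=\parallel g\parallel_N-1$ forbids $\ell_C(\varphi(g))\le m$). Since $\parallel\cdot\parallel_N$ is integer-valued, Remark~\ref{norm-eq} shows this abstract homomorphism $\varphi$ is a $K$-$Q$-almost-homomorphism, giving RF as a normed group.

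The remaining and genuinely new part is $1\Rightarrow 2$ under the assumption that $S$ is finite; here finiteness is essential because we must upgrade an abstract separating homomorphism to a \emph{metric} homomorphism, i.e.\ control $\ell_C(\varphi(g))\le\parallel g\parallel_N$ for \emph{all} $g\in G$ simultaneously, not just on $K$. I expect this to be the main obstacle. The idea is that when $S$ is finite, $S^{\mathsf F}N$ normally generates $G$ with respect to a finite symmetric set, so on any finite quotient $C$ the word norm $\ell_C$ generated by the image of $S$ is automatically dominated by $\parallel\cdot\parallel_N$: a $\parallel\cdot\parallel_N$-geodesic word of length $k$ for $g$ maps to a product of $k$ conjugates of generators in $C$. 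Thus the abstract homomorphism produced for RF is \emph{already} a metric homomorphism once $C$ carries the word norm generated by $\varphi(S)$, and it remains only to check that on $K$ the norm is preserved (equality, not just $\le$) — which is exactly what the separation conditions used in $4\Rightarrow 1$ deliver. So the implementation is: rerun the $4\Rightarrow 1$ construction but take $\ell_C$ to be the word norm generated by $\varphi(S)$ rather than the pushed-forward norm; verify $\ell_C\le\parallel\cdot\parallel_N\circ(\text{any lift})$ globally by the geodesic-word argument, and verify equality on $\varphi(K)$ by the separations. The one subtlety to be careful about is ensuring the finitely many separating maps can be chosen so that their common refinement $C$ is generated by $\varphi(S)$ and still realizes the lower bounds on norms on $K$; this is handled by also throwing into $Q$ the value $m_0$ and noting a finite quotient generated by a finite conjugacy-closed set has the word norm bounded above by that of $G$.
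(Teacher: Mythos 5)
Your handling of $2\Rightarrow 3$, $3\Rightarrow 4$ and $4\Rightarrow 1$ is essentially sound and close to the paper's own argument (diagonal product of separating quotients instead of a single finite-index subgroup $H_F$, and the word norm generated by the full image of $S^{\mathsf F}$ instead of the paper's norm generated by $F_1^{\mathsf F}H_F$ padded by a large constant; the only omission is injectivity on $K$, which the definition of a $D$-$Q$-almost-homomorphism requires and which you must secure by also separating the elements $g_1g_2^{-1}$, $g_1\neq g_2\in K$, from $N$ via the case $m=0$). The genuine gap is in $1\Rightarrow 2$, which is exactly the part where finiteness of $S$ is the issue: your implementation is to ``rerun the $4\Rightarrow 1$ construction'' and to get the lower bounds $\ell_C(\varphi(g))\geq \parallel g\parallel_N$ on $K$ ``by the separations''. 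But those separations are the content of condition 4, and nothing proved so far gives $1\Rightarrow 4$ (the chain runs $2\Rightarrow 3\Rightarrow 4\Rightarrow 1$); starting from hypothesis 1 you only have, for each finite $D$ and $Q$, an abstract homomorphism $\varphi:G\to(C,\ell_C)$ preserving the $Q$-comparisons of the norm on $D$, with $\ell_C$ an arbitrary invariant norm. From this you cannot conclude $\varphi(g)\notin\varphi(B^G_m(1))$ for $m<\parallel g\parallel_N$ unless you already know $\varphi$ is norm-nonincreasing --- which is precisely what $1\Rightarrow 2$ is supposed to establish. So as written the argument proves (a version of) $4\Rightarrow 2$, not $1\Rightarrow 2$, and the claimed equivalence $1\Leftrightarrow 2$ is not reached.

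The paper closes this step directly from condition 1: since $S$ is finite, choose representatives $v_1,\dots,v_m$ of the conjugacy classes of $S^{\mathsf F}$, put $\{v_iN\}$ together with $FN$ into $D$ and $1$ (and the relevant norm values) into $Q$; the almost-homomorphism condition forces $\ell(\varphi(v_iN))=1$, and then invariance of $\ell$ together with the triangle inequality gives $\ell(\varphi(h))\leq\parallel h\parallel_N$ for every $h\in G$, i.e.\ the very same homomorphism, with the very same norm, is already a metric homomorphism. Your idea of replacing $\ell_C$ by the word norm $\ell'_C$ generated by $\varphi(\overline{SN})$ can be salvaged, but the missing lower bound must then come from the comparison $\ell_C\leq\ell'_C$ on $C$, which again requires forcing $\ell_C(\varphi(sN))=1$ (equivalently $\ell_C(\varphi(v_iN))=1$) by putting these finitely many norm-one representatives into $D$ --- not from ``throwing $m_0$ into $Q$'', which does not address it, and not from condition-4 separations, which you are not entitled to use here.
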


\begin{proof}  
$2 \Rightarrow 3$.  
W.l.o.g. we may assume that there are elements $g\in G$ with 
$\parallel g \parallel_N \ge 2$. 
In order to prove that the set 
$B^G_m (1) = \{ x \in G \, | \, \parallel x \parallel_N \le m \}$ is closed, take any $g\not\in B^G_m (1)$. 
Choose a metric homomorphism $\varphi$ from $G$ to some finite $(H,\ell )$ such that 
$ \ell (\varphi (g)) = \parallel g\parallel_N$. 
If $\varphi (g) \in \varphi (B^G_m (1))$ then $\ell (\varphi (g)) \le m$ by the triangle inequality. 
Since $\parallel g \parallel_N = \ell (\varphi (g))$ we have a contradiction. 
  
$3 \Rightarrow 4$.                  
We preserve the notation of the previous paragraph. 
In particular $B^G_m (1)$ consists of all cosets from $B^{\mathsf{F},\parallel\cdot\parallel}_m (1) N$. 
If $w\not\in B^{\mathsf{F},\parallel \cdot \parallel}_m (1) N$, then $wN \not\in B^G_m (1)$. 
Choose a homomorphism $\varphi$ from $\mathsf{F}/N$ to some finite $H$ such that $\varphi (wN) \not\in \varphi(
B^G_m (1))$. 
Then the corresponding homomorphism $\hat{\varphi}: \mathsf{F} \rightarrow \mathsf{F}/N \to H$ gives 
$\hat{\varphi} (w) \not\in \hat{\varphi}(B^{\mathsf{F},\parallel \cdot \parallel}_m (1)N)$ and $\hat{\varphi} (N) =1$.  

$4 \Rightarrow 1$. 
Take any finite $F\subset \langle S^{\mathsf{F}}\rangle$
representing pairwise distinct elements modulo $N$. 
For every $f \in F$ find a presentation with the shortest $n_f$:   
\[ 
fN = g_{f,1} \cdot \ldots \cdot g_{f,n_f}N \, , \, \mbox{ where } 
g_{f, i} \in B^{\mathsf{F},\parallel\cdot\parallel}_1 (1) . 
\] 
Let $F_1$ be the set of all $g_{f,i}$ with $f\in F$ and $i\le n_f$. 
We may assume that $F_1 \subseteq F$.  
Then we associate $n_f =1$ to each $f\in F_1$. 

Take a finite $Q\subset \mathbb{N}$. 
Let $n= \mathsf{max} (|f|\, | \, f\in F)$. 
We may assume that $Q$ is an initial segment of natural numbers and contains $\mathsf{max}\{ \parallel fN\parallel_N \, | \, f\in B^{\mathsf{F},|\cdot |}_{2n} (1)\} +1$. 
Note that  $F \subseteq B^{\mathsf{F},|\cdot |}_{2n} (1)$, and  
$n_f = \parallel fN \parallel_N$ for each $f \in F$. 

By  the assumption of this part of the proof, 
for each $w\in  B^{\mathsf{F},| \cdot |}_{2n} (1) \setminus \{ 1\}$ 
there is a subgroup $H_w \triangleleft \, \mathsf{F}$ of finite index such that 
$N \subseteq H_w$ and for every $m\in Q$ with $w \not\in B^{\mathsf{F},\parallel \cdot\parallel}_{m} (1)N$ we have 
$wH_w \cap B^{\mathsf{F},\parallel \cdot\parallel}_{m} (1)N = \emptyset$. 
Now we find a subgroup of finite index of $\mathsf{F}$, say $H_F$, such that $N\subseteq H_F$ and for all $w\in B^{\mathsf{F},|\cdot |}_{2n} (1)\setminus \{ 1 \}$ and $m\in Q$ with 
$w\not\in B^{\mathsf{F},\parallel \cdot\parallel}_{m} (1)N$
we have 
$wH_F \cap B^{\mathsf{F},\parallel \cdot\parallel}_{m} (1)N = \emptyset$.  
In particular, taking $m=0$, we see $B^{\mathsf{F},| \cdot |}_{2n} (1)N \cap H_F \subseteq  N$. 
The latter obviously means that the cosets from 
$B^{\mathsf{F},|\cdot |}_n (1)N \subset \mathsf{F}/N$ have pairwise distinct representatives in $\mathsf{F}/H_F$. 

Let us define an invariant word norm on the finite group $\mathsf{F}/H_F$  taking the cosets from $F^{\mathsf{F}}_1 H_F$ 
as a generating set. 
It can happen that 
$\langle F^{\mathsf{F}}_1 H_F\rangle\not=\mathsf{F}/H_F$, i.e. the norm is defined only for elements of a proper subgroup of $\mathsf{F}/H_F$.
Then we just fix a sufficiently large natural number $\hat{m}$ (for example $\hat{m} > [\mathsf{F}:H_F]$) and take it as the norm for the elements of $\mathsf{F}/H_F \setminus \langle F^{\mathsf{F}}_1 H_F\rangle$. 
By the previous paragraph, for each $g\in F\setminus \{ 1 \}$ the norm of $gH_F\in \mathsf{F}/H_F$ defined in this way
coincides with $n_g = \parallel g\parallel_N$. 
We see that the finite group $\mathsf{F}/H_F$ has a norm that witnesses residual finiteness of $\mathsf{F}/N$ as a metric group for the set $FN$. 

$1 \Leftrightarrow 2$ for finite $S$. 
Assume 1. 
Let $v_1$, $v_2$,..., $v_m$ be a list of representatives of all conjugacy classes of $S^{\mathsf{F}}$. 
Thus 
\[ 
B^{\mathsf{F},\parallel\cdot\parallel}_1 (1)N =    
(v_{1}^{\mathsf{F}} \cup v_{2}^{\mathsf{F}} \ldots \cup v_{m}^{\mathsf{F}})N. 
\] 
Take any finite $F\subset \langle S^{\mathsf{F}}\rangle$
representing pairwise distinct elements modulo $N$. 
Choose a homomorphism $\varphi$ from $G$ onto some finite $(H,\ell )$ such that 
\[ 
\ell (\varphi (v_i N)) = \parallel v_i N\parallel_N (\mbox{ i.e.}=1), \, \,  
1 \le i \le m, \mbox{ and } 
\ell (\varphi (g)) = \parallel g\parallel_N \mbox{ for } g\in F.
\] 
Note that under the homomorphism $\varphi$ 
the norm does not increase, i.e. it is a metric homomorphism.  
Indeed, if $\parallel h \parallel_N = m$, then $h\in B^G_m (1)$, i.e. $\varphi (h) \in (\varphi (B^G_1 (1)))^m$. 
By the triangle inequality $\ell (\varphi (h)) \le m$.  
\end{proof}

\subsection{When metric RF does not depend on metric} 

Assume that $G$ is normally generated by finite $S_1$ and $S_2$. 
Let $\parallel \cdot \parallel_1$ and $\parallel \cdot\parallel_2$ be the corresponding word metrics of $G$. 
The following question looks very interesting. 
Does the property that $G$ is metrically LEF (metrically RF or metrically weakly sofic) with respect to one of the norms $\parallel \cdot \parallel_1$ and $\parallel \cdot \parallel_2$ imply the same property for another norm?   
We do not even know if these word norms generate the same topology in the following sense. 
Note that the topology defined by $\parallel \cdot \parallel_i$, $i\in \{ 1,2\}$, according to the recipe of  Section 2.1, is the discrete $\{ 0,1\}$-topology. 
On the other hand if we exclude from the sub-base of all $B_{<r}(g)$ the balls of size 1 then we obtain a coarser topology which is not necessarily discrete. 
We will say that this is the {\em corrected} topology defined by the norm. 
Using Theorem \ref{mRF} 
we see the following statement. 

\begin{corollary} 
Assume that 
$\parallel \cdot \parallel_1$   
and $\parallel \cdot \parallel_2$ 
generate the same corrected topology.  
Then $(G,\parallel \cdot \parallel_1 )$ is metrically RF if and only if so is $(G, \parallel \cdot \parallel_2 )$.  
\end{corollary}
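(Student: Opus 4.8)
The natural strategy is to run everything through the profinite--topological criterion of Theorem~\ref{mRF}. Since $S_1$ and $S_2$ are both finite, that theorem says that $(G,\parallel\cdot\parallel_i)$ is metrically RF precisely when every ball $B^{G}_{i,m}(1)=\{g\in G:\parallel g\parallel_i\le m\}$ is closed in the profinite topology of $G$ ($i=1,2$). So the statement reduces to proving: if $\parallel\cdot\parallel_1$ and $\parallel\cdot\parallel_2$ define the same corrected topology $\tau$ on $G$, then all the $B^{G}_{1,m}(1)$ are profinite-closed iff all the $B^{G}_{2,m}(1)$ are; by symmetry one implication suffices, so assume every $B^{G}_{1,k}(1)$ is profinite-closed and fix $m$ (we may take $m\ge1$, since $B^{G}_{i,0}(1)=\{1\}$ is profinite-closed once $G$ is residually finite, which either hypothesis implies). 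Two facts are free of the hypothesis on the topology: since $G$ is normally generated by $S_1$, each $s\in S_2$ has $\parallel s\parallel_1<\infty$, and conjugacy invariance gives $s^{G}\subseteq B^{G}_{1,K}(1)$ with $K=\mathsf{max}\{\parallel s\parallel_1:s\in S_2\}$, whence $B^{G}_{2,m}(1)=(\overline{S_2}\cup\{1\})^{m}\subseteq B^{G}_{1,mK}(1)$; and $B^{G}_{2,m}(1)$ is a finite union of products $a_{1}^{G}\cdots a_{m}^{G}$ with $a_j\in S_2\cup\{1\}$.

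Next I would use the topology. As $\parallel\cdot\parallel_i$ is integer valued, for $m\ge1$ the closed ball $B^{G}_{i,m}(1)$ equals the open ball $B^{G}_{i,<r}(1)$ for any rational $r\in(m,m+1]$, so it is a sub-basic $\tau_i$-open set; hence every translate $gB^{G}_{i,k}(1)$ is $\tau$-open and such translates form a sub-base of $\tau$. Consequently $B^{G}_{2,m}(1)$, being $\tau=\tau_1$-open, is a union of finite intersections of translates $gB^{G}_{1,k}(1)$. Each such translate is closed in the profinite topology (left and right translations are homeomorphisms of that topology), and a finite intersection of profinite-closed sets is profinite-closed; so $B^{G}_{2,m}(1)$ is a union of profinite-closed subsets of the profinite-closed set $B^{G}_{1,mK}(1)$.

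It remains to make this union finite, equivalently to separate an arbitrary $g\notin B^{G}_{2,m}(1)$ from $B^{G}_{2,m}(1)$ by a single finite quotient. If $g\notin B^{G}_{1,mK}(1)$ this is immediate from profinite-closedness of $B^{G}_{1,mK}(1)\supseteq B^{G}_{2,m}(1)$, so one is reduced to the fixed bounded set $B^{G}_{1,mK}(1)$; there the finiteness of $S_1$ and $S_2$, combined with the finite decomposition of $B^{G}_{2,m}(1)$ into products of conjugacy classes, should let one extract finitely many of the covering sets $\bigcap_l g_{l}B^{G}_{1,k_{l}}(1)$. Once $B^{G}_{2,m}(1)=\bigcup_{j=1}^{t}\bigcap_{l}g_{jl}B^{G}_{1,k_{jl}}(1)$ with finitely many terms, it is a finite union of profinite-closed sets, hence profinite-closed, and Theorem~\ref{mRF} gives that $(G,\parallel\cdot\parallel_2)$ is metrically RF; interchanging the roles of the two norms yields the converse. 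I expect this last passage to a finite subcover to be the genuine obstacle: it is the only point where both the finiteness of the generating sets and the full force of $\tau_1=\tau_2$ (beyond the automatic inclusions $B^{G}_{2,m}(1)\subseteq B^{G}_{1,mK}(1)$ and $B^{G}_{1,m}(1)\subseteq B^{G}_{2,mK'}(1)$) have to be used, and pinning it down is the heart of the argument.
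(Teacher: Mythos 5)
Your reduction is the one the paper itself indicates: by Theorem~\ref{mRF} (with $S_1,S_2$ finite), metric RF for $\parallel\cdot\parallel_i$ is equivalent to all balls $B^{G}_{i,m}(1)$ being closed in the profinite topology of $G$, and your preliminary observations are correct (the Lipschitz containment $B^{G}_{2,m}(1)\subseteq B^{G}_{1,mK}(1)$, the fact that translates of profinite-closed sets are profinite-closed, and the fact that $B^{G}_{2,m}(1)$, being open in the common corrected topology, is a union of basic sets $\bigcap_{l}g_{l}B^{G}_{1,k_{l}}(1)$). But the argument is not complete, and you say so yourself: all you actually obtain is that $B^{G}_{2,m}(1)$ is a union, in general infinite, of profinite-closed sets, and an infinite union of closed sets need not be closed. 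The missing passage to finitely many covering sets is not a technicality that finiteness of $S_1$ and $S_2$ supplies. Extracting a finite subcover would require $B^{G}_{2,m}(1)$ to be compact in the corrected topology, which is neither assumed nor available (already $B^{G}_{2,1}(1)=\overline{S_2}\cup\{1\}$ is an infinite union of conjugacy classes, and nothing in the hypotheses gives it any compactness); and since the covering sets are profinite-closed rather than profinite-open, compactness in the profinite completion cannot be invoked either. Your fallback, the finite decomposition of $B^{G}_{2,m}(1)$ into products $a_{1}^{G}\cdots a_{j}^{G}$ with $a_i\in S_2$, does not rescue the argument: profinite-closedness of such products of conjugacy classes is not a consequence of the closedness of the $\parallel\cdot\parallel_1$-balls; statements of that type are exactly what Theorem~\ref{mws} trades against metric weak soficity, i.e.\ they are of the same nature as what you are trying to prove.

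What the union-of-closed-sets strategy cannot deliver, and what is actually needed, is a separation statement: for each $w\notin B^{G}_{2,m}(1)$ a single finite quotient (equivalently, a finite union of translated $\parallel\cdot\parallel_1$-balls containing $B^{G}_{2,m}(1)$ and missing $w$) that separates $w$ from the whole ball. The hypothesis that the two corrected topologies coincide must be exploited in this ``co-ball'' form, i.e.\ through complements of balls, which is the form in which condition 3 of Theorem~\ref{mRF} speaks; note that two families of sets can generate the same topology while the families of their complements generate different ones, so equality of the corrected topologies does not formally hand you the transfer you need. The paper derives the corollary from Theorem~\ref{mRF} in one line without spelling out this transfer, so you have correctly identified the intended route; but in your write-up the key implication (closedness of all $\parallel\cdot\parallel_1$-balls plus equality of corrected topologies implies closedness of all $\parallel\cdot\parallel_2$-balls) is in the end asserted rather than proved, and the specific covering mechanism you propose would fail as stated.
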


\begin{remark} 
{\em When $G$ is normally generated by finite $S_1$ and $S_2$  
there is a natural number $K$ such that each element of $S_2$ is an $\overline{S}_1$-word of length $\le K$ and vice versa. 
The same property holds for $\overline{S}_1$ and $\overline{S}_2$. 
As a result we see that for any $g\in G$, $\parallel g\parallel_1 \le K \parallel g\parallel_2$ and $\parallel g\parallel_2 \le K \parallel g\parallel_1$, i.e. $\parallel \cdot \parallel_1$ and $\parallel \cdot \parallel_2$ are Lipschitz equivalent.  
}
\end{remark}

\section{Metric weak soficity}  
 
The following theorem characterizes metric weak soficity for word metrics. 
We preserve the notation of Section 3.1.  
 
\begin{theorem} \label{mws} 
Assume that the set $S\subseteq \mathsf{F}$ defining the word norm $\parallel \cdot \parallel_N$ is finite. 
The normed group $(\mathsf{F} /N, \parallel \cdot \parallel_N )$ is metrically weakly sofic if and only if for any finite sequence $g_1$, $g_2$,..., $g_k\in B^{\mathsf{F},\parallel\cdot\parallel}_1 (1)$ and $h_1$, $h_2$,..., $h_{l}$ from $N$ (possibly with repetitions) the closure of every product 
$g_{i_1}^{\mathsf{F}} \cdot g_{i_2}^{\mathsf{F}} \ldots \cdot g_{i_m}^{\mathsf{F}}\cdot h_{1}^{\mathsf{F}} \cdot h_{2}^{\mathsf{F}} \ldots h_{l}^{\mathsf{F}}$ is a subset of $B^{\mathsf{F},\parallel\cdot\parallel}_m (1) N$. 
\end{theorem}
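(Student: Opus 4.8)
\noindent
The plan is to prove the two implications separately, in each direction using the separability criterion for profinitely closed sets (the first Remark of Section 3.1) to pass between the finite normed group of Definition \ref{DefMetrAppr} and finite quotients of $\mathsf{F}$.

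\emph{Necessity.} I argue contrapositively. Fix $g_1,\dots,g_k\in B^{\mathsf{F},\parallel\cdot\parallel}_1(1)$, elements $h_1,\dots,h_l\in N$, a product $P=g_{i_1}^{\mathsf{F}}\cdots g_{i_m}^{\mathsf{F}}h_1^{\mathsf{F}}\cdots h_l^{\mathsf{F}}$, and $w\in\mathsf{F}$ with $\parallel wN\parallel_N\ge m+1$; it suffices to produce a finite quotient of $\mathsf{F}$ separating $w$ from $P$. Apply metric weak soficity to the finite set $F_0\subseteq G$ consisting of the images of $1,w,g_{i_1},\dots,g_{i_m},h_1,\dots,h_l$, of a fixed free basis $x_1,\dots,x_d$ of $\mathsf{F}$ together with their inverses, and of all prefixes of fixed words over $\{x_j^{\pm 1}\}$ representing these elements, with $\varepsilon$ still to be chosen; this gives $(C,\ell_C)$ and an injective $\phi:F_0\to C$. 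By freeness there is a homomorphism $\psi:\mathsf{F}\to C$ with $\psi(x_j)=\phi(x_jN)$, and telescoping the $\varepsilon$-errors along the chosen words yields $|\,\ell_C(\psi(u))-\parallel uN\parallel_N\,|\le(3|u|+1)\varepsilon$ for each chosen representative $u$. Hence $\ell_C(\psi(w))\ge m+1-(3|w|+1)\varepsilon$, while by invariance of $\ell_C$ every element of $\psi(P)=\psi(g_{i_1})^{\psi(\mathsf{F})}\cdots\psi(h_l)^{\psi(\mathsf{F})}$ has $\ell_C$-norm at most $m+K\varepsilon$, where $K$ depends only on $m,l$ and the word lengths of the $g$'s and $h$'s. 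Choosing $\varepsilon<1/(3|w|+1+K)$ makes $\ell_C(\psi(w))$ strictly larger than the $\ell_C$-norm of every element of $\psi(P)$, so $\psi(w)\notin\psi(P)$ and $\psi:\mathsf{F}\to\psi(\mathsf{F})$ is the desired separation.

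\emph{Sufficiency.} Fix finite $F\subseteq G$ with $1\in F$ and $\varepsilon\in(0,1)$. For $f\ne 1$ write $\tilde f=g_{f,1}\cdots g_{f,n_f}\in\langle S^{\mathsf{F}}\rangle$, a product of $n_f:=\parallel f\parallel_N$ elements of $S^{\mathsf{F}}\cup\{1\}$ representing $f$, and set $\tilde 1=1$. The approximating group will be $C:=\mathsf{F}/H$ for a suitable finite-index normal $H\triangleleft\mathsf{F}$, with norm $\ell_C(1):=0$ and $\ell_C(q):=\max\{\lambda(q),\delta\}$ for $q\ne 1$, where $0<\delta<\varepsilon$ and $\lambda$ is the invariant word pseudonorm of $\mathsf{F}/NH$ (with generators the image of $S$) pulled back to $\mathsf{F}/H$; one checks this is an invariant norm. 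Put $\phi(f):=\rho(\tilde f)$ with $\rho:\mathsf{F}\to C$. Since $\widetilde{gh}^{-1}\tilde g\tilde h\in N$ for every triple in $F$, the pseudonorm $\lambda$ kills $\phi(gh)^{-1}\phi(g)\phi(h)$, so $\ell_C(\phi(gh)^{-1}\phi(g)\phi(h))\le\delta<\varepsilon$, and $\phi(1)=1$. (One cannot use an integer-valued $\ell_C$ here: for $\varepsilon<1$ that would force an exact homomorphism on triples with exact norms, i.e.\ metric LEF.) For injectivity of $\phi$ — using residual finiteness of $\mathsf{F}$ — and for $|\,n_f-\ell_C(\phi(f))\,|<\varepsilon$ it suffices to have $H$ contained in a finite-index $K\supseteq N$ with $\tilde f\notin B^{\mathsf{F},\parallel\cdot\parallel}_{n_f-1}(1)\,K$ for every $f\ne 1$: then $\lambda(\rho(\tilde f))=n_f$ ($\le n_f$ as $\tilde f$ is a product of $n_f$ conjugates of generators, $\ge n_f$ as $NH\subseteq K$), so $\ell_C(\phi(f))=n_f$. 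Writing $B^{\mathsf{F},\parallel\cdot\parallel}_{n_f-1}(1)$ as a finite union of products $P_t$ of at most $n_f-1$ conjugacy classes of elements of $S$, and using that $\bigcap\{P_tK:K\supseteq N\text{ finite-index}\}$ equals the profinite closure $\overline{P_tN}$, the existence of such a $K$ reduces to the statement $\tilde f\notin\overline{P_tN}$ for each $t$. Here the hypothesis enters: $\overline{P_tN}$ is the union, over finite sequences $h_1,\dots,h_l\in N$, of the closures $\overline{P_t h_1^{\mathsf{F}}\cdots h_l^{\mathsf{F}}}$, each of which the hypothesis places inside $B^{\mathsf{F},\parallel\cdot\parallel}_{n_f-1}(1)N$, and $\tilde f$ lies in none of those because $\parallel\tilde fN\parallel_N=n_f$. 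Intersecting the finitely many $K$'s and then shrinking $H\subseteq K$ to secure injectivity completes the construction.

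\emph{The main obstacle.} The last step of sufficiency is delicate: the hypothesis bounds each $\overline{P_t h_1^{\mathsf{F}}\cdots h_l^{\mathsf{F}}}$ separately, but $\overline{P_tN}$ is the closure of their \emph{directed union}, which in general strictly exceeds the union of the closures, so one cannot simply conclude $\tilde f\notin\overline{P_tN}$. Bridging this requires extra work — either a compactness argument in the profinite completion $\widehat{\mathsf{F}}$ showing that $\tilde f\in\overline{P_tN}$ already forces $\tilde f\in\overline{P_t h_1^{\mathsf{F}}\cdots h_l^{\mathsf{F}}}$ for one finite sequence, or a stagewise construction of $H$ that keeps under control the number of $N$-conjugates needed to exhaust the normal closure of a fixed finite subset of $N$ inside the finite group $\mathsf{F}/H$. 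I expect this reconciliation of the per-product closure hypotheses into a single finite quotient to be the technical heart of the proof; the necessity direction and the normed-group bookkeeping are comparatively routine.
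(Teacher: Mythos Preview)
Your necessity argument is essentially the paper's: both extend $\phi$ to a homomorphism $\psi:\mathsf{F}\to C$ via the free basis, telescope the $\varepsilon$-errors along words, and separate $w$ from the product by a norm comparison. That direction is fine.

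The sufficiency direction has a genuine gap, and you have located it precisely. Your norm $\ell_C=\max\{\lambda,\delta\}$, with $\lambda$ pulled back from $\mathsf{F}/NH$, gives value $\le\delta$ to \emph{every} element of $NH/H$; so forcing $\lambda(\rho(\tilde f))\ge n_f$ requires separating $\tilde f$ from the whole set $B^{\mathsf{F},\parallel\cdot\parallel}_{n_f-1}(1)\,N$ in a single finite quotient. The hypothesis only controls closures $\overline{P_t\,h_1^{\mathsf{F}}\cdots h_l^{\mathsf{F}}}$ one finite sequence at a time, and, as you say, the closure of the directed union can strictly exceed the union of closures. Neither of your proposed fixes is clearly available: a compactness argument in $\widehat{\mathsf{F}}$ would still need membership in the closure of the union to force membership in the closure of one term, which is exactly what fails for a non-closed directed union; and the ``stagewise construction'' is left unspecified.

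The paper does not close this gap --- it sidesteps it by choosing a different norm on the finite group. Instead of killing all of $N$, it equips $H=\mathsf{F}/H_F$ with the \emph{weighted} invariant word norm whose generators are the images of $B^{\mathsf{F},\parallel\cdot\parallel}_1(1)$ with weight $1$ together with the images of the \emph{finitely many} elements of $B^{\mathsf{F},|\cdot|}_n(1)\cap N$ with weight $\varepsilon$. Any expression for $\phi(f)$ of total weight $\le m$ then uses at most $m$ weight-$1$ conjugates and some $l$ weight-$\varepsilon$ conjugates with $l\varepsilon\le m$; in particular $l$ is bounded \emph{a priori}. This reduces the required separation to the finitely many products $(B^{\mathsf{F},\parallel\cdot\parallel}_1(1))^m\cdot h_1^{\mathsf{F}}\cdots h_l^{\mathsf{F}}$ with $l\varepsilon\le m+1$ and $h_j$ drawn from the finite set $B^{\mathsf{F},|\cdot|}_n(1)\cap N$, each of which the hypothesis handles directly. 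The price is that the norm of $\phi(f)$ lands in $[n_f,\,n_f+\varepsilon]$ rather than exactly at $n_f$, but that is all Definition~\ref{DefMetrAppr} requires. So the technical heart is not a compactness trick over an infinite union; it is the choice of a weighted norm that makes the separation problem finite from the outset.
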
 

\begin{proof} 
Necessity.  
We want to show that the complement of $B^{\mathsf{F},\parallel\cdot\parallel}_m (1) N$ does not intersect the closure of the product 
$g_{i_1}^{\mathsf{F}} \cdot g_{i_2}^{\mathsf{F}} \ldots \cdot g_{i_m}^{\mathsf{F}}\cdot h_{1}^{\mathsf{F}} \cdot h_{2}^{\mathsf{F}} \ldots h_{l}^{\mathsf{F}}$. 
Assume $w \not\in B^{\mathsf{F},\parallel\cdot\parallel}_m (1) N$. 
Let $s =\parallel w N \parallel_N$. 
Thus $s>m$.  
Let $n$ be a natural number such that  \[ 
m (\mathsf{max} (| g_i | \, | i\le k )) + l (\mathsf{max}( |h_j | \, | j\le l )) + |w| <n. 
\]
Since $(\mathsf{F}/N, \parallel \cdot \parallel_N )$ is metrically weakly sofic, for any $\varepsilon >0$ there is a finite normed group $(H,\ell )$ and a map $\phi: {\mathsf{F}}/N \rightarrow H$ witnessing metric weak soficity for $\varepsilon$ and all words from $B^{\mathsf{F},|\cdot |}_{<n}(1)$. 
Take $\varepsilon < \frac{1}{4(mn +ln +m+l)}$. 

Let $\tilde{\phi}$ be the homomorphism $\mathsf{F} \to H$ defined by $\phi$ on the free basis of $\mathsf{F}$. 
It is easy to see that for any $v\in B^{\mathsf{F},|\cdot ]}_{<n}(1)$ we have $\ell ((\tilde{\phi}(v))^{-1} \phi (vN)) < 2n\varepsilon$ (we should take double $n\varepsilon$ since inverses of the elements of the base can also appear in $v$).  
It suffices to show that $\tilde{\phi}(w)$ does not belong to 
\[ 
\tilde{\phi} (g_{i_1}^{\mathsf{F}} \cdot g_{i_2}^{\mathsf{F}} \ldots \cdot g_{i_m}^{\mathsf{F}}\cdot h_{1}^{\mathsf{F}} \cdot h_{2}^{\mathsf{F}} \ldots h_{l}^{\mathsf{F}}). 
\] 
Assume the contrary, i.e. 
$\tilde{\phi}(w) = \tilde{\phi}(g^{u_1}_{i_1}\cdot g^{u_2}_{i_2} \ldots \cdot g^{u_m}_{i_m}\cdot h^{v_1}_{1} \cdot h^{v_2}_{2} \ldots \cdot h^{v_l}_{l})$. 
Since 
$|\ell (\phi (g_i N)) - 1 | <\varepsilon$ for $1 \le i \le k$, and 
$\ell (\phi (h_i N)) <\varepsilon$
for $1 \le i \le l$, 
we see that $\ell (\tilde{\phi} (g_i )) <1 + (2n+1)\varepsilon$
for $1 \le i \le k$, and
$\ell (\tilde{\phi} (h_i )) <(2n+1)\varepsilon$ for $1 \le i \le l$. 
By the triangle inequality for $\ell$ we see that 
\[ 
\ell (\tilde{\phi}(g^{u_1}_{i_1}\cdot g^{u_2}_{i_2} \ldots \cdot g^{u_m}_{i_m}\cdot h^{v_1}_{1} \cdot h^{v_2}_{2} \ldots \cdot h^{v_l}_{l}) <  m + (2n+1)m\varepsilon + (2n+1)l\varepsilon < m+1/2. 
\]  
On the other hand  
$s \le \ell (\phi (wN)))+\varepsilon  \le \ell (\tilde{\phi} (w)) + (2n+1)\varepsilon$. 
In particular 
$s <  m + 1/2 + (2n+1)\varepsilon <m +1$. 
This is a contradiction with the choice of $m$ and $s$.  
 
Sufficiency. 
Take a real $\varepsilon >0$ and a finite subset $F \subseteq \mathsf{F}$ representing pairwise distinct elements modulo $N$. 
For every $f \in F$ find a presentation with the shortest $n_f$:   
\[ 
fN = g_{f,1} \cdot \ldots \cdot g_{f,n_f}N \, , \, \mbox{ where } 
g_{f, i} \in B^{\mathsf{F},\parallel\cdot\parallel}_1 (1) . 
\] 
Let $F_1$ be the set of all $g_{f,i}$ with $f\in F$ and $i\le n_f$. 
We may assume that $F_1 \subseteq F$ and every conjugacy class from $S^{\mathsf{F}}$ is represented in $F_1$. 
Then we associate $n_f =1$ to each $f\in F_1$. 

Let $n$ be a natural number such that 
$B^{\mathsf{F},|\cdot |}_n (1)$ intersects each conjugacy class from 
$B^{\mathsf{F},\parallel\cdot \parallel}_1 (1)$ and 
\[ 
(|F| +2) \cdot \mathsf{max} (n_f \, | \, f\in F)\cdot  (\mathsf{max} (| g | \, | g\in F ))  <n. 
\]
Find a homomorphism to a finite group 
$\phi : \mathsf{F} \to H$ which is injective on the set of all elements of length $\le n$ and the following property holds: 
\begin{quote} 
$(\dagger )$ for every $w\in B^{\mathsf{F},|\cdot |}_n (1)$, 
for every $m\le \mathsf{max}(n_f \, | \, f\in F)$ such that 
$w \not\in B^{\mathsf{F},\parallel\cdot\parallel}_m (1)N$
and for every sequence $h_1 ,\ldots ,h_{l} \in B^{\mathsf{F},|\cdot |}_n (1)\cap N$ with $l\cdot \varepsilon \le m+1$,  
the element $\phi (w)$ does not belong 
to 
\[ 
(\phi (B^{\mathsf{F},\parallel\cdot \parallel}_1 (1)))^m\cdot \phi (h^{\mathsf{F}}_1 )\cdot \ldots \cdot \phi (h^{\mathsf{F}}_{l}). 
\] 
\end{quote} 
It is worth noting here that the set 
$(B^{\mathsf{F},\parallel\cdot \parallel}_1 (1))^m\cdot  h^{\mathsf{F}}_1 \cdot \ldots \cdot h^{\mathsf{F}}_{l}$ contains  
every set of the form  
\[ 
h_{1}^{\mathsf{F}} \cdot h_{2}^{\mathsf{F}}\cdot \ldots \cdot h_{l_1}^{\mathsf{F}} g_{i_1}^{\mathsf{F}} \cdot h_{l_1 +1}^{\mathsf{F}} \cdot h_{l_1 +2}^{\mathsf{F}} \ldots h_{l_2}^{\mathsf{F}} \cdot g_{i_2}^{\mathsf{F}} \cdot \ldots \cdot \cdot h_{\ell_m}^{\mathsf{F}}\cdot g_{i_m}^{\mathsf{F}}\cdot h_{l_{m+1} + 1}^{\mathsf{F}} \cdot h_{l_{m+1} +2}^{\mathsf{F}} \ldots \cdot h_{l}^{\mathsf{F}},  
\]
where $g_{i_j} \in B^{\mathsf{F},\parallel\cdot\parallel}_1 (1)$ and $l_1 < l_2 < \ldots < l_m < l$. 
In fact $(B^{\mathsf{F},\parallel\cdot \parallel}_1 (1))^m\cdot  h^{\mathsf{F}}_1 \cdot \ldots \cdot h^{\mathsf{F}}_{l}$ is the union of them.   

Take the invariant word norm on $H$ defined by all elements of  $\phi (B^{\mathsf{F},|\cdot |}_n (1))$ 
which belong to $\phi (B^{\mathsf{F},\parallel \cdot \parallel}_1 (1) \cup N)\setminus \{ 1\}$ and re-scale it for elements of $\phi(N)$ assigning the weight $\varepsilon$ to each representative of $\phi (B^{\mathsf{F},|\cdot |}_n (1)\cap N$). 

We view $\phi$ on cosets from $B^{\mathsf{F},|\cdot |}_n (1)N$ as a map which assigns $\phi (f)$ to $fN$ with $f\in B^{\mathsf{F},|\cdot |}_n (1)$. 
In order to verify the weak soficity condition for $\phi$, $\varepsilon$ and elements of $FN$ 
note that when $f_1 N\cdot f_2 N= f_3 N$ for $f_1 ,f_2,f_3 \in F$ we have that $\phi (f_1 )\phi (f_2) (\phi (f_3))^{-1}$ is equal to some $\phi (h)$ with $h\in N$, i.e an element of the norm $\varepsilon$.  
 
When $m\le \mathsf{max} (n_f \, | \, f\in F)$, $f\in F$ and $\parallel fN\parallel_N > m$, then the element $f$ does not belong to any  $(B^{\mathsf{F},\parallel\cdot \parallel}_1 (1))^m\cdot  h^{\mathsf{F}}_1 \cdot \ldots \cdot h^{\mathsf{F}}_{l}$ with 
$h_1 , \ldots , h_{l} \in N$.  
In particular by $(\dagger )$ (and the comment after it) the norm of $\phi (f)$ in $H$ is greater than $m$. 
Since $\parallel fN\parallel_N =n_f$ 
we see that the norm of $\phi (f)$ is not smaller than $n_f$. 
 
By the choice of $n$ the element $(g_{f,1} \cdot \ldots \cdot g_{f,n_f})^{-1}\cdot f$ belongs to 
$B^{\mathsf{F},|\cdot |}_n (1) \cap N$. 
Thus $\phi ((g_{f,1} \cdot \ldots \cdot g_{f,n_f})^{-1}\cdot f)\le \varepsilon$. 
In particular the norm of $\phi (f)$ in $H$ does not exceed $n_f + \varepsilon$. 
This shows that $\phi$, $FN$ and $\varepsilon$ satisfy the condition of the definition of metric weak soficity. 
\end{proof}

\bigskip

\begin{remark} \label{mws-nes} 
{\em In the proof of necessity of Theorem \ref{mws} we do not use the assumption of finiteness of $S$. 
Thus the following statement holds. \\  
{\em In the assumptions of the theorem admit that $S$ can be infinite and can meet infinitely many conjugacy classes in $\mathsf{F}$. 
Then if the normed group $(\mathsf{F}/N, \parallel \cdot \parallel_N )$ is metrically weakly sofic then 
for any finite sequence $g_1$, $g_2$,..., $g_k\in B^{\mathsf{F},\parallel\cdot\parallel}_1 (1)$ and $h_1$, $h_2$,..., $h_{l}$ from $N$ (possibly with repetitions) the closure of the product  
$g_{i_1}^{\mathsf{F}} \cdot g_{i_2}^{\mathsf{F}} \ldots \cdot g_{i_m}^{\mathsf{F}}\cdot h_{1}^{\mathsf{F}} \cdot h_{2}^{\mathsf{F}} \ldots h_{l}^{\mathsf{F}}$ is a subset of $B^{\mathsf{F},\parallel\cdot\parallel}_m (1) N$. 
}}
\end{remark}

The following statement is a part of the the main result of \cite{GR}. 

\begin{corollary} 
If the group $\mathsf{F} /N$ is weakly sofic then for any finite sequence $h_1$, $h_2$,..., $h_{\ell}$ from $N$ (possibly with repetitions) the closure of any product  
$h_{1}^{\mathsf{F}} \cdot h_{2}^{\mathsf{F}} \ldots h_{\ell}^{\mathsf{F}}$ is a subset of  $N$.
\end{corollary}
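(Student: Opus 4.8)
The plan is to imitate the necessity half of the proof of Theorem~\ref{mws} in the degenerate case $m=0$, replacing metric weak soficity by plain weak soficity (Definition~\ref{DefGR}): note that when $m=0$ no word norm on $\mathsf{F}/N$ is used at all, only the bounds ``$\ell_C(\phi(1))<\varepsilon$'' and ``$\ell_C(\phi(g))>r$ for $g\neq 1$''. Write $G=\mathsf{F}/N$ and, for the given $h_1,\dots,h_{\ell}\in N$, set $X=h_1^{\mathsf{F}}\cdot h_2^{\mathsf{F}}\cdots h_{\ell}^{\mathsf{F}}$, so that the elements of $X$ are exactly the products $h_1^{u_1}h_2^{u_2}\cdots h_{\ell}^{u_{\ell}}$ with $u_i\in\mathsf{F}$. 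First I would reduce the statement to: for every $w\in\mathsf{F}\setminus N$ there is a homomorphism from $\mathsf{F}$ to a finite group separating $w$ from $X$ (this is equivalent to $\overline{X}\subseteq N$, by the separability description of profinite-closed sets recalled in Section~2 of \cite{GR}). I would also record that we may assume the weak-soficity constant $r$ to be positive (rescale the norms $\ell_C$; see \cite{GR}).

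Fixing such a $w$, I would put $n=\mathsf{max}(|w|,|h_1|,\dots,|h_{\ell}|)$, so $n\ge 1$ since $w\neq 1$, and apply weak soficity of $G$ to the finite set $F=\{\,vN : v\in\mathsf{F},\ |v|\le n\,\}$ — which contains $1$, contains $wN\neq 1$, and contains every $h_iN=1$ — together with a small $\varepsilon>0$ to be pinned down at the end. This yields a finite invariantly normed group $(C,\ell_C)$ and an injective $\phi:F\to C$ obeying the three clauses of Definition~\ref{DefGR}; let $\tilde\phi:\mathsf{F}\to C$ be the homomorphism given on the fixed free basis by $\tilde\phi(x_j)=\phi(x_jN)$. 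Exactly as in the proof of Theorem~\ref{mws} one gets $\ell_C(\tilde\phi(v)^{-1}\phi(vN))<2n\varepsilon$ whenever $|v|\le n$. Hence, using $h_iN=1$, $\ell_C(\phi(1))<\varepsilon$ and invariance of $\ell_C$, each conjugate satisfies $\ell_C(\tilde\phi(h_i^{u_i}))=\ell_C(\tilde\phi(h_i))<(2n+1)\varepsilon$, so by the triangle inequality $\ell_C(\tilde\phi(x))<\ell(2n+1)\varepsilon$ for every $x\in X$; on the other hand $\ell_C(\tilde\phi(w))>\ell_C(\phi(wN))-2n\varepsilon>r-2n\varepsilon$.

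Finally, since $\ker\tilde\phi$ has finite index in $\mathsf{F}$ it is open, so $\tilde\phi^{-1}(\tilde\phi(X))$ is a clopen set containing $X$, hence containing $\overline{X}$; thus it suffices to arrange $\tilde\phi(w)\notin\tilde\phi(X)$. Choosing $\varepsilon<\frac{r}{2n+\ell(2n+1)}$ forces $\ell_C(\tilde\phi(w))>r-2n\varepsilon>\ell(2n+1)\varepsilon>\ell_C(\tilde\phi(x))$ for all $x\in X$, so $\tilde\phi(w)\notin\tilde\phi(X)$ and therefore $w\notin\overline{X}$; as $w\in\mathsf{F}\setminus N$ was arbitrary, $\overline{X}\subseteq N$. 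The only point I expect to need real care is the reduction to $r>0$: for $r\le 0$ the inequality $\ell_C(\tilde\phi(w))>r-2n\varepsilon$ gives nothing, and a crude rescaling of $\ell_C$ to enlarge the lower bound on $\ell_C(\phi(wN))$ ruins the bound $\ell_C(\phi(1))<\varepsilon$; the free-basis lifting estimate and the triangle-inequality bookkeeping are routine and already appear in the proof of Theorem~\ref{mws}.
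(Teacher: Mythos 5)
Your proof is correct, but it follows a genuinely different route from the paper's. The paper obtains the corollary as a formal consequence of what is already proved: it observes that the discrete $\{0,1\}$-norm on $\mathsf{F}/N$ is itself a word norm $\parallel\cdot\parallel_N$, namely the one associated with a (necessarily infinite) set $S\subset\mathsf{F}$ meeting every non-trivial conjugacy class, so that $B^{\mathsf{F},\parallel\cdot\parallel}_0(1)N=N$, and then invokes Remark \ref{mws-nes} (the infinite-$S$ form of the necessity half of Theorem \ref{mws}) with $m=0$; the only hidden step is that plain weak soficity of $\mathsf{F}/N$ yields metric weak soficity for this $\{0,1\}$-norm, which one gets by rescaling/truncating the norms $\ell_C$ supplied by Definition \ref{DefGR}. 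You instead rerun the $m=0$ case of the necessity argument from scratch directly against Definition \ref{DefGR}: the lift $\tilde\phi$, the $2n\varepsilon$ estimate on $B^{\mathsf{F},|\cdot|}_{n}(1)$, the bound $\ell_C(\tilde\phi(h_i^{u_i}))<(2n+1)\varepsilon$ via invariance, the lower bound $\ell_C(\tilde\phi(w))>r-2n\varepsilon$, and the clopen set $\tilde\phi^{-1}(\tilde\phi(X))=X\cdot\ker\tilde\phi$ that traps the closure of $X$ — all of this is sound, and it is exactly the bookkeeping the paper reuses by citing the Remark. Your route buys self-containedness (no detour through an infinite generating set and the metric notion); the paper's buys brevity. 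As for the one point you leave open: the positivity of $r$ is not something to prove but the intended reading of Definition \ref{DefGR} — it is the separation constant of \cite{GR}, and if $r\le 0$ were admitted the definition would be satisfied by every group (equip any finite group containing an injective copy of $F$ with the invariant norm equal to $\varepsilon/2$ off the identity), so nothing could follow from it. Note that the paper's own reduction needs the same $r>0$ when passing from weak soficity to metric weak soficity for the $\{0,1\}$-norm; and rescaling is not actually destructive (apply the definition with accuracy $\varepsilon/\lambda$ and multiply $\ell_C$ by $\lambda$), it simply cannot manufacture positivity — so take $r>0$ as given by the definition and your argument is complete.
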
 
\begin{proof} 
In order to see how it follows from Theorem \ref{mws} and Remark \ref{mws-nes} consider $\mathsf{F} /N$ together with the discrete $\{ 0,1\}$-metric. 
The latter one is the word metric corresponding to a set $S\subset \mathsf{F}$ which meets every non-trivial conjugacy class of $\mathsf{F}$. 
Now we may use Remark \ref{mws-nes}.  
\end{proof} 

Before the following statement we remind the reader that the free group $\mathsf{F}_2$ with a free base  
$x_0 , x_1$ is viewed as the word metric group with respect to $S=\{ x^{\pm 1}_0 , x^{\pm 1}_1 \}$, see Section 3.1. 

\begin{corollary} \label{Fnmws} 
The normed group $(\mathsf{F}_2 , \parallel \cdot \parallel )$ is not metrically weakly sofic. 
\end{corollary}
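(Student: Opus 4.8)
The plan is to deduce this from Theorem \ref{mws} applied to $\mathsf{F}=\mathsf{F}_2$, $N=\{1\}$ and $S=\{x_0^{\pm 1},x_1^{\pm 1}\}$. Since $N$ is trivial the elements $h_j$ contribute nothing, $B^{\mathsf{F}_2,\parallel\cdot\parallel}_1(1)$ is the set of conjugates of $x_0^{\pm 1},x_1^{\pm 1}$ together with $1$, and for each fixed $m$ the ball $B_m:=\{g\in\mathsf{F}_2:\parallel g\parallel\le m\}$ is the union of the \emph{finitely many} products $C_1\cdots C_m$ over all choices of conjugacy classes $C_i\in\{x_0^{\mathsf{F}_2},x_0^{-\mathsf{F}_2},x_1^{\mathsf{F}_2},x_1^{-\mathsf{F}_2},\{1\}\}$. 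By Theorem \ref{mws}, if $(\mathsf{F}_2,\parallel\cdot\parallel)$ were metrically weakly sofic the closure of every such product would lie in $B_m$, and since closure commutes with finite unions this would force every $B_m$ to be closed in the profinite topology of $\mathsf{F}_2$ (this is also condition (3) of Theorem \ref{mRF}). Hence it suffices to exhibit an $m$ for which $B_m$ is \emph{not} profinitely closed.

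To do this I would pass to the profinite completion $\widehat{\mathsf{F}_2}$ and write $\overline{B_1}$ for the closure there of $B_1:=B^{\mathsf{F}_2,\parallel\cdot\parallel}_1(1)$; since the closure of a conjugacy class of $\mathsf{F}_2$ is the corresponding conjugacy class of the compact group $\widehat{\mathsf{F}_2}$, one gets $\overline{B_1}=\{1\}\cup x_0^{\widehat{\mathsf{F}_2}}\cup x_0^{-\widehat{\mathsf{F}_2}}\cup x_1^{\widehat{\mathsf{F}_2}}\cup x_1^{-\widehat{\mathsf{F}_2}}$. Compactness of $\overline{B_1}$ gives that $(\overline{B_1})^m$ is compact, hence closed, and equals the closure of $B_m=(B_1)^m$ in $\widehat{\mathsf{F}_2}$; therefore the profinite closure of $B_m$ in $\mathsf{F}_2$ is $\mathsf{F}_2\cap(\overline{B_1})^m$. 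Now $\bigcup_m(\overline{B_1})^m$ is the abstract subgroup of $\widehat{\mathsf{F}_2}$ generated by $\overline{B_1}$; it is normal (being generated by a conjugation-invariant set), it contains $x_0,x_1$ and so is dense, and by the theorem of Nikolov and Segal that a finitely generated abstract normal subgroup of a finitely generated profinite group is closed, it is all of $\widehat{\mathsf{F}_2}$.

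Thus the compact --- in particular Baire --- space $\widehat{\mathsf{F}_2}$ is the countable union of the closed sets $(\overline{B_1})^m$, so one of them, call it $(\overline{B_1})^m$, has nonempty interior and therefore contains a coset $\gamma K$ of an open (equivalently, finite-index) normal subgroup $K$ of $\widehat{\mathsf{F}_2}$; by density we may take $\gamma\in\mathsf{F}_2$, and then the profinite closure $\mathsf{F}_2\cap(\overline{B_1})^m$ of $B_m$ contains the coset $\gamma L$ of the finite-index subgroup $L:=K\cap\mathsf{F}_2$ of $\mathsf{F}_2$. But $B_m$ itself contains no coset of a finite-index subgroup: under the abelianization $\mathsf{F}_2\to\mathbb{Z}^2$ the ball $B_m$ lands in the finite set $\{v:|v_1|+|v_2|\le m\}$, whereas the image of $\gamma L$ is a coset of the (finite-index, hence unbounded) image of $L$ in $\mathbb{Z}^2$. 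So $B_m\subsetneq\overline{B_m}$, i.e.\ $B_m$ is not profinitely closed, and by the first paragraph $(\mathsf{F}_2,\parallel\cdot\parallel)$ is not metrically weakly sofic.

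The delicate point is the identity $\widehat{\mathsf{F}_2}=\bigcup_m(\overline{B_1})^m$, which I would obtain from the Nikolov--Segal structure theory of finitely generated profinite groups; an alternative, more hands-on route would be to construct explicitly an $m$ and a $w$ with $\parallel w\parallel>m$ that lies in the profinite closure of a product of $m$ conjugacy classes of generators (cf.\ \cite{IS}). This seems harder than it looks: the nilpotent quotients $\mathsf{F}_2/\gamma_c(\mathsf{F}_2)$ show that the obvious candidates, such as powers of $[x_0,x_1]$, do not work, so some genuinely two-variable construction would be needed. Everything else --- the reduction via Theorem \ref{mws}, the computation of closures of products of compact sets, the Baire argument, and the abelianization obstruction to a norm-ball containing a finite-index coset --- is routine.
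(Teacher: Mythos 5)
Your first paragraph is exactly the paper's reduction: with $N=\{1\}$, Theorem \ref{mws} forces every ball $B_m$ to be a finite union of closures-contained-in-$B_m$ products of the four generator classes (and $\{1\}$), hence profinitely closed; the paper then finishes by simply citing the Segal--Gismatullin observation (Remark 4 of \cite{NST}, derived from \cite{NS} via \cite{thomover}) that some $(B_1)^n$ is \emph{not} closed. You instead try to prove that observation, and this is where your argument breaks. The identity $\widehat{\mathsf{F}}_2=\bigcup_m(\overline{B_1})^m$, which you yourself flag as the delicate point, is false, and the Nikolov--Segal statement you invoke for it is not a theorem: a finitely generated abstract normal subgroup of a finitely generated profinite group need not be closed (take $\mathbb{Z}\le\mathbb{Z}_p$), and even the repaired version ``the abstract normal closure of a finite set is closed'' fails. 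Indeed, $M:=\bigcup_m(\overline{B_1})^m$ is precisely the abstract normal closure of $\{x_0,x_1\}$ in $\widehat{\mathsf{F}}_2$; under the abelianization $\widehat{\mathsf{F}}_2\to\widehat{\mathbb{Z}}^2$ every generator of $M$ (a conjugate of $x_i^{\pm1}$) lands in $\mathbb{Z}^2$, so the image of $M$ is $\mathbb{Z}^2$, a proper dense subgroup of $\widehat{\mathbb{Z}}^2$. Hence $M$ is a proper dense (in particular non-closed) normal subgroup of $\widehat{\mathsf{F}}_2$, the Baire-category step has nothing to start from, and no $(\overline{B_1})^m$ is forced to have interior.

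The gap is not cosmetic, because your closing obstruction would also have to change. The failure of $M=\widehat{\mathsf{F}}_2$ is itself an abelianization phenomenon: bounded products of conjugates of the generators can never fill the $\widehat{\mathbb{Z}}^2$-direction, which is exactly what your final argument exploits against $B_m$ containing a finite-index coset. The genuine Nikolov--Segal input behind the observation (Theorem 1.2 of \cite{NS}, as explained in \cite{thomover}) lives inside the derived subgroup: it gives closedness, with bounded width, of subgroups of the form $[H,\widehat{\mathsf{F}}_2]$ for closed normal $H$, so that, e.g., every element of the closed derived subgroup of $\widehat{\mathsf{F}}_2$ is a product of boundedly many conjugates of $x_0^{\pm1},x_1^{\pm1}$; if all $(B_1)^n$ were closed this would bound $\parallel\cdot\parallel$ on $[\mathsf{F}_2,\mathsf{F}_2]$, and the contradiction then needs unboundedness of the conjugation-invariant word norm on the commutator subgroup, which abelianization cannot see (one uses quasimorphism-type arguments as in \cite{BGKM}, \cite{BIP}). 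So either quote the observation as a black box, as the paper does, or rebuild this step along those lines; as written, your proof of the key non-closedness claim does not work.
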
 

\begin{proof} 
We will apply Theorem \ref{mws} in the situation when $N=\{ 1\}$. 
Since $B^{\mathsf{F}, \parallel \cdot \parallel}_1 (1)$ consists of four conjugacy classes, every  $B^{\mathsf{F}, \parallel \cdot \parallel}_m (1)$ is a union of finitely many products of the form 
$g_{1}^{\mathsf{F}} \cdot g_{2}^{\mathsf{F}} \ldots \cdot g_{k}^{\mathsf{F}}$ with $k\le m$ and $g_1, \ldots , g_k \in B^{\mathsf{F}, \parallel \cdot \parallel}_1 (1)$. 
Assuming that $(\mathsf{F}_2 , \parallel \cdot \parallel )$ is metrically weakly sofic we see by Theorem \ref{mws} that the ball $B^{\mathsf{F}, \parallel \cdot \parallel}_m (1)$ is a union of finitely many closed subsets. 
Thus $B^{\mathsf{F}, \parallel \cdot \parallel}_m (1)$ is also closed. 
This contradicts the following observation of D. Segal and J. Gismatullin (independently, see Remark 4 in \cite{NST}):  
\begin{quote} 
there is a natural number $n$ such that $(B^{\mathsf{F}, \parallel \cdot \parallel}_1 (1))^n$ is not closed in the profinite topology. 
\end{quote} 
The latter is a consequence of a deep theorem of Nikolov and Segal from \cite{NS} (see Theorem 1.2 there 
and explanations of A. Thom in \cite{thomover}). 
\end{proof} 

Although this statement looks to be unexpected, an example from \cite{NST} of a group which is not topologically weakly sofic together with the main construction from \cite{Doucha} suggest that some $\mathsf{F}_n$ is not metrically weakly sofic with respect to some (possibly graded) word metric.  
Indeed, any metric group (with $d\le 1$) 
embeds into a metric ultraproduct of finitely generated free groups with discrete bi-invariant metrics.  
This follows from the construction of Doucha of a universal 
separable group $\mathbb{G}$ equipped with a complete 
bi-invariant metric bounded by 1. 
According to \cite{Doucha} $\mathbb{G}$ is the completion 
of a Fra\"{i}ss\'{e} limit of free metric groups as above. 
As a result the following statement was already known since 2015:  
there is a finitely generated free group with a bi-invariant 
discrete metric which is not metrically weakly sofic.

\section{Metric LEF}

\subsection{Characterization of metric LEF} 
Our characterization of metric LEF clarifies the situations of Sections 3 and 4. 
We preserve the notation of Section  3.1. 

\begin{theorem} \label{mlef} 
Assume that the set $S\subseteq \mathsf{F}$ defining the word metric $\parallel \cdot \parallel_N$ is finite. 
The normed group $(\mathsf{F} /N, \parallel \cdot \parallel_N )$ is metrically LEF if and only if for any finite set $F \subset \mathsf{F}$ representing pairwise distinct elements modulo $N$, for any natural number $m$ and any $w \in \mathsf{F}\setminus B^{\mathsf{F},\parallel\cdot\parallel}_m (1) N$ there is a homomorphism $\varphi$ from $\mathsf{F}$ onto a finite group $H$ such that $\varphi (w) \not\in \varphi (B^{\mathsf{F},\parallel\cdot\parallel}_m (1) N)$ and the corresponding map $fN \to \varphi (f)$, $f\in F$, is a partial isomorphism 
$\mathsf{F}/N\to H$ defined on  $\{ fN | f\in F\}$. 
\end{theorem}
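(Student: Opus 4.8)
The plan is to prove the two implications separately, in the spirit of the proofs of Theorems~\ref{mRF} and~\ref{mws}, and to identify the right hand side with condition~(3) of Theorem~\ref{mRF}. The first thing I would do is record a reformulation: if $\varphi:\mathsf{F}\to H$ is a surjection onto a finite group, put $\bar H=H/\varphi(N)$ and let $\tilde\rho:G\to\bar H$ be the induced surjection (it exists because $\varphi$ followed by $H\to\bar H$ kills $N$). Using $B^{\mathsf{F},\parallel\cdot\parallel}_m(1)N=\pi^{-1}(B^G_m(1))$ and $\varphi(B^{\mathsf{F},\parallel\cdot\parallel}_m(1)N)=\varphi(B^{\mathsf{F},\parallel\cdot\parallel}_m(1))\varphi(N)$, one checks directly that $\varphi(w)\notin\varphi(B^{\mathsf{F},\parallel\cdot\parallel}_m(1)N)$ is equivalent to $\tilde\rho(wN)\notin\tilde\rho(B^G_m(1))$, and that the partial-isomorphism clause is equivalent to $\tilde\rho$ being injective on $\{fN:f\in F\}$. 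Taking $m=0$ (where $B^{\mathsf{F},\parallel\cdot\parallel}_0(1)N=N$) shows that the right hand side forces $G$ to be residually finite; conversely, if $G$ is residually finite and every $B^G_m(1)$ is closed in the profinite topology of $G$, then for a given $(F,m,w)$ the required $\varphi$ is obtained by composing $\pi:\mathsf F\to G$ with the product of a finite quotient of $G$ separating $wN$ from $B^G_m(1)$ and a finite quotient on which $\{fN:f\in F\}$ embeds. Thus the right hand side is precisely condition~(3) of Theorem~\ref{mRF}.

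Granting this, sufficiency is immediate: by Theorem~\ref{mRF} ($3\Rightarrow 4\Rightarrow 1$) the condition implies that $(G,\parallel\cdot\parallel_N)$ is RF as a normed group, and a homomorphism which is a $D$-$Q$-almost-homomorphism is in particular a $D$-$Q$-almost-homomorphism of normed groups, so $(G,\parallel\cdot\parallel_N)$ is metrically LEF.

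For necessity, which is the substantive direction, I would show directly that metric LEF forces every $B^G_m(1)$ to be profinitely closed (residual finiteness of $G$ being the case $m=0$). By finiteness of $S$ the ball $B^G_1(1)$ is the union of $\{1\}$ with finitely many conjugacy classes $u_1^G,\dots,u_r^G$, so $B^G_m(1)$ is the finite union of the products $u_{i_1}^G\cdots u_{i_k}^G$ over $k\le m$; since the profinite closure of a finite union is the union of the closures, it suffices to bound each $\overline{u_{i_1}^G\cdots u_{i_k}^G}$ inside $B^G_m(1)$. Fix $g\notin B^G_m(1)$ with $\parallel g\parallel_N=s>m$ and apply metric LEF to a finite $D$ containing $g$, $1$, the representatives $u_i$, a shortest $\overline{SN}$-decomposition of $g$ together with all its subproducts and the conjugators occurring in it, and to $Q=\{0,1,\dots,s\}$; by Remark~\ref{norm-eq} this gives a finite invariantly normed $(C,\ell_C)$ and an injective $D$-$Q$-almost-homomorphism $\phi$ with $\ell_C(\phi(g))=s$ and $\ell_C(\phi(u_i))=1$. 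Invariance of $\ell_C$ then yields $\ell_C(\phi(u_{i_1})^{c_1}\cdots\phi(u_{i_k})^{c_k})\le k\le m<s$ for arbitrary $c_j\in C$, so $\phi(g)$ lies outside every product of at most $m$ conjugacy classes of the $\phi(u_i)$ in $C$. One then extends $\phi$ to a homomorphism $\hat\phi:\mathsf{F}\to C$ by $\hat\phi(x_j)=\phi(x_jN)$ on the free basis; since $D$ is closed under subproducts of bounded $\mathsf{F}$-length and $\phi$ is multiplicative on triples, $\hat\phi$ agrees with $\phi\circ\pi$ on a ball of $\mathsf{F}$ large enough to carry the chosen decompositions, and one passes to $C/\hat\phi(N)$, a finite quotient of $G$, to extract the desired separation.

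The main obstacle is this last step: $\hat\phi$ need not vanish on all of $N$, and $\hat\phi(N)$ might collapse the separation just established in $C$. Getting through it is where finiteness of $S$ has to be used once more — it keeps the number of conjugacy-class products that must be separated finite — together with the exactness of the norm condition on $D$: one chooses the length bound and $D$ so that every element of $N$ relevant to comparing $g$ with those finitely many products has small $\mathsf{F}$-length and hence lies in the domain of $\phi$, where it is sent to $1$ and where the equality $\ell_C(\phi(g))=s>m$ rules out $\phi(g)\in\phi(u_{i_1})^C\cdots\phi(u_{i_k})^C\cdot\hat\phi(N)$. Once all balls $B^G_m(1)$ are known to be closed, $G$ is residually finite and the required $\varphi$ is assembled exactly as in the converse-to-sufficiency argument of the first paragraph.
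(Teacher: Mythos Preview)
Your reformulation in the first paragraph is correct and useful: passing to $\bar H=H/\varphi(N)$ one does get that the right--hand side of the theorem is equivalent to condition~(3) of Theorem~\ref{mRF} (closedness of every $B^G_m(1)$ in the profinite topology of $G$). This makes your sufficiency argument perfectly valid and shorter than the paper's, which instead repeats the direct construction from the proof of $4\Rightarrow 1$ in Theorem~\ref{mRF} (choosing, for each $w$ in a large $|\cdot|$--ball, a finite--index $H_w$, intersecting, and equipping $\mathsf{F}/H_F$ with the word norm coming from $B^{\mathsf{F},\parallel\cdot\parallel}_1(1)H_F$).

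For necessity, however, your detour through condition~(3) forces you to produce a finite quotient \emph{of $G$}, and this is exactly where the obstacle you name becomes fatal: the proposed fix (``choose $D$ so that every element of $N$ relevant to the comparison is short'') does not work, because $\hat\phi(N)$ is the image of \emph{all} of $N$, and the conjugacy--class products $\phi(u_{i_1})^{c_1}\cdots\phi(u_{i_k})^{c_k}$ range over arbitrary conjugators $c_j\in C$, not just those coming from short words. No finite $D$ controls this. The paper avoids the detour entirely: it never passes to $H/\varphi(N)$ and never tries to prove~(3). It takes the very $\varphi:\mathsf{F}\to H$ induced by $\psi$ on the free basis, observes that on the $|\cdot|$--ball of radius $n$ one has $\varphi=\psi\circ\pi$ (so the partial--isomorphism clause on $F$ is immediate from injectivity and multiplicativity of $\psi$ on $D$), and then argues the separation $\varphi(w)\notin\varphi(B^{\mathsf{F},\parallel\cdot\parallel}_m(1)N)$ \emph{inside $(H,\ell)$} via the norm: $\ell(\varphi(w))=\parallel wN\parallel_N>m$, whereas any element of $\varphi(B^{\mathsf{F},\parallel\cdot\parallel}_m(1)N)$ is asserted to have $\ell\le m$ by the triangle inequality, using $\ell(\varphi(s))=1$ for $s\in S$ and invariance. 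The point is that the statement being proved only asks for a homomorphism out of $\mathsf{F}$, not out of $G$; by aiming instead for the stronger condition~(3) you have made the task harder than the theorem requires, and the extra difficulty is precisely the unresolved passage through $\hat\phi(N)$.
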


\begin{proof} 
Necessity. 
Take any finite $F$, a number $m$ and $w\in \mathsf{F}\setminus B^{\mathsf{F},\parallel \cdot \parallel}_m (1) N$ as in the formulation of the theorem. 
We may assume that $F$ contains $w$, a list of 
representatives of the conjugacy classes of $S^{\mathsf{F}}$,  
and the free generators of $\mathsf{F}$, and, furthermore, it is closed under the operation of taking subwords of a word. 

Let $n$ be a natural number such that  
\[ 
|F| (\mathsf{max} (|f| \, | \, f\in F)) + |S| (\mathsf{max} (| g| \, | g\in S )) +  |w| <n,  
\] 
Since $(\mathsf{F}/N, \parallel \cdot \parallel_N )$ is metrically LEF, there is a finite normed group $(H,\ell )$ and a map $\psi: {\mathsf{F}}/N \rightarrow H$ witnessing metric LEF for all words from $B^{\mathsf{F},|\cdot |}_{<n}(1)$. 
According to Remark \ref{norm-eq} and Proposition \ref{RvsN} we may assume that $\ell$ is integer-valued and the norm inequalities in the definition of LEF are equalities. 
In particular, 
\[ 
\ell (\psi (wN)) = \parallel w N\parallel_N \, \, \mbox{ and } \, \, \ell (\psi (g N)) = \parallel g N\parallel_N  \mbox{ for } g\in S \, \, (\mbox{ i.e. }=1).  
\] 
Let $\varphi$ be the homomorphism $\mathsf{F} \to H$ defined by $\psi$ on the free basis of $\mathsf{F}$. 
It is easy to see that the map $vN \to \varphi(v)$, $v\in B^{\mathsf{F},|\cdot |}_{<n}(1)$,  induces $\psi$ on $B^{\mathsf{F},|\cdot |}_{<n}(1)N$. 

If $\varphi (w) \in \varphi(B^{\mathsf{F},\parallel \cdot \parallel}_m (1)N)$ then 
the value $\ell (\varphi (w))$ is not greater than $m$ by the triangle inequality. 
This leads to a contradiction with  the equality 
$\parallel wN \parallel_N =\ell (\varphi (w))$.  

Sufficiency. 
To see that $(\mathsf{F} /N, \parallel \cdot \parallel_N )$ is metrically LEF 
take any finite $F\subset \langle S^{\mathsf{F}}\rangle$
representing some elements of $\mathsf{F}/N$. 
We may assume that $F$ is closed under taking subwords. 
Furthermore, for every $f \in F$ find a presentation with the shortest $n_f$:   
\[ 
fN = g_{f,1} \cdot \ldots \cdot g_{f,n_f}N \, , \, \mbox{ where } 
g_{f, i} \in B^{\mathsf{F},\parallel\cdot\parallel}_1 (1) . 
\] 
Let $F_1$ be the set of all $g_{f,i}$ with $f\in F$ and $i\le n_f$. 
We will also assume that $F_1 \subseteq F$ and that every conjugacy class from $S^{\mathsf{F}}$ is represented in $F_1$. 
Associate $n_f =1$ to each $f\in F_1$. 

Take a finite $Q\subset \mathbb{N}$. 
Let $n= \mathsf{max} (|f|\, | \, f\in F)$. 
We may assume that $Q$ is an initial segment of the set of natural numbers and contains $\mathsf{max}\{ \parallel fN\parallel_N \, | \, f\in B^{\mathsf{F},|\cdot |}_{2n} (1)\} +1$. 
Note that $n_f = \parallel fN\parallel_N$ for each $f \in F$. 
For every $w\in B^{\mathsf{F},| \cdot |}_{2n} (1) \setminus \{ 1\}$ 
there is a subgroup $H_w \triangleleft \, \mathsf{F}$ of finite index such that 
\begin{itemize} 
\item the map $fN \to fH_w$ is a partial isomorphism on $\{ fN | f\in B^{\mathsf{F},| \cdot |}_{2n} (1) \}$, 
\item 
for every $m\in Q$ with $w \not\in B^{\mathsf{F},\parallel \cdot\parallel}_{m} (1)N$ we have 
$wH_w \cap B^{\mathsf{F},\parallel \cdot\parallel}_{m} (1)N = \emptyset$. 
\end{itemize} 
Now it is easy to find a subgroup of finite index of $\mathsf{F}$, say $H_F$, such that these conditions are satisfied simultaneously for all $w\in B^{\mathsf{F},|\cdot |}_{2n} (1)\setminus \{ 1 \}$ and $m\in Q$ 
with $H_F$ instead of $H_w$. 

Let us define a word norm on the  finite group $\mathsf{F}/H_F $  taking the cosets from $F^{\mathsf{F}}_1 H_F$ (in fact $B^{\mathsf{F},\parallel \cdot\parallel}_1 (1)H_F$) as a generating set. 
By the previous paragraph for each $w\in F\setminus \{ 1 \}$ the norm of $wH_F \in \mathsf{F}/H_F$ defined in this way coincides with $n_w = \parallel wN\parallel_N$. 
We see that the finite group $\mathsf{F}/H_F $ has a norm  witnessing LEF for $FN$ in the normed group $(\mathsf{F}/N, \parallel \cdot \parallel_N )$. 
\end{proof}

\begin{corollary} \label{LEF-RF} 
Assume that the set $S\subseteq \mathsf{F}$ defining the word metric $\parallel \cdot \parallel_N$ is finite and that $N<\mathsf{F}$ is finitely generated as a normal subgroup.  
Then the condition that the normed group $(\mathsf{F} /N, \parallel \cdot \parallel_N )$ is
metrically LEF implies  that it is metrically RF.
\end{corollary}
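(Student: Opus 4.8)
The plan is to reduce to the characterization of metric RF in Theorem~\ref{mRF}; the one new ingredient beyond that theorem and the definition of metric LEF is that finite normal generation of $N$ lets one promote an almost-homomorphism of $G=\mathsf{F}/N$ to a genuine homomorphism. Concretely, I would verify condition~(1) of Theorem~\ref{mRF} (that $(G,\parallel\cdot\parallel_N)$ is RF as a normed group): given a finite $K\subseteq G$ and a finite $Q\subset\mathbb{N}$ with $0\in Q$, I must exhibit a finite invariantly normed group $(C,\ell_C)$ and a group \emph{homomorphism} $\bar\varphi\colon G\to C$ which is a $K$-$Q$-almost-homomorphism in the sense of Definition~\ref{mLEFm}. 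Since $S$ is finite, Theorem~\ref{mRF} then upgrades this to metric RF.

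First I would fix a finite set $\{t_1,\dots,t_r\}\subseteq\mathsf{F}$ whose normal closure in $\mathsf{F}$ is $N$, choose representatives in $\mathsf{F}$ for the elements of $K$, and pick $n\in\mathbb{N}$ large enough that all these representatives and all the $t_j$ lie in $B^{\mathsf{F},|\cdot|}_n(1)$. Applying metric LEF to the finite set $D:=\{\,vN:|v|\le n\,\}$ and to $Q$ produces a finite invariantly normed $(C,\ell_C)$ and a $D$-$Q$-almost-homomorphism $\psi\colon G\to C$. Now let $\varphi\colon\mathsf{F}\to C$ be the homomorphism that sends each free generator $x$ of $\mathsf{F}$ to $\psi(xN)$. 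Reading a word one generator at a time — all of whose prefixes again lie in $B^{\mathsf{F},|\cdot|}_n(1)$ — and using that $\psi$ is multiplicative on triples from $D$, an immediate induction on word length (the same computation that occurs in the proofs of Theorems~\ref{mRF} and \ref{mlef}) gives $\varphi(v)=\psi(vN)$ for every $v$ with $|v|\le n$.

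The key step is then short. Applying $\psi$ to the triple $1_G\cdot 1_G=1_G$ in $D$ yields $\psi(1_G)^2=\psi(1_G)$, hence $\psi(1_G)=1_C$; consequently $\varphi(t_j)=\psi(t_jN)=\psi(1_G)=1_C$ for each $j$. Since $\ker\varphi$ is a normal subgroup of $\mathsf{F}$ containing $t_1,\dots,t_r$, it contains their normal closure $N$, so $\varphi$ descends to a homomorphism $\bar\varphi\colon G\to C$. On $K\subseteq D$ the map $\bar\varphi$ agrees with $\psi$, so it inherits injectivity on $K$, multiplicativity on the triples from $K$ (it is in any case a genuine homomorphism), and the norm conditions relative to $q\in Q$; thus $\bar\varphi$ is the required $K$-$Q$-almost-homomorphism which is simultaneously a homomorphism. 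As $K$ and $Q$ were arbitrary and $S$ is finite, Theorem~\ref{mRF} lets us conclude that $(\mathsf{F}/N,\parallel\cdot\parallel_N)$ is metrically RF.

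I do not anticipate a genuine obstacle: the bookkeeping (the choice of $n$, the check that the homomorphic extension $\varphi$ agrees with $\psi$ on the $n$-ball, and that $\bar\varphi$ is still an almost-homomorphism on $K$) is routine and already appears, essentially verbatim, in the proofs of Theorems~\ref{mRF} and \ref{mlef}. The only substantive new point — and the sole place where the hypothesis that $N$ is finitely generated as a normal subgroup is used essentially — is that $N\subseteq\ker\varphi$ can be tested on the finitely many generators $t_j$, each of which is forced to map to $\psi(1_G)=1_C$; for an infinitely normally generated $N$ there is no reason for the extended homomorphism $\varphi$ to kill all of $N$, so one should not expect the implication to survive in that generality.
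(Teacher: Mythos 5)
Your argument is correct, and it rests on exactly the same key observation as the paper's proof --- once an almost-homomorphism is coherent on a ball containing the finitely many normal generators of $N$, the induced homomorphism of $\mathsf{F}$ must kill $N$ --- but it is packaged along a different route. The paper deduces the corollary in a few lines from Theorem \ref{mlef}: when the finite set $F$ there contains all words of length up to the lengths of the normal generators, the homomorphisms $\varphi$ furnished by that characterization automatically satisfy $N\subseteq\ker\varphi$, hence factor through $G$ and show that each ball $B^{G}_m(1)$ is closed in the profinite topology of $G$ (condition 3 of Theorem \ref{mRF}); the chain $3\Rightarrow 4\Rightarrow 1\Leftrightarrow 2$ (the last for finite $S$) then finishes. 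You bypass Theorem \ref{mlef} entirely: starting from Definition \ref{mLEF} you extend the almost-homomorphism $\psi$ to a genuine homomorphism $\varphi$ of $\mathsf{F}$ agreeing with $\psi$ on the $n$-ball (the same computation that occurs inside the necessity part of Theorem \ref{mlef}), kill $N$ via the generators $t_j$, and thereby verify condition 1 of Theorem \ref{mRF} directly, concluding with $1\Leftrightarrow 2$. Your version is more self-contained (only the definition of metric LEF plus Theorem \ref{mRF} are used), while the paper's is shorter because it reuses Theorem \ref{mlef}. Two cosmetic points to tidy: in the induction on word length you should record that $\psi(x^{-1}N)=\psi(xN)^{-1}$, which follows from $\psi(1_G)=1_C$ together with multiplicativity on the triple $(xN,x^{-1}N,1_G)\in D^3$, so that inverse letters are covered; and Definition \ref{mres} quantifies over all finite $Q\subset\Lambda\cap\mathbb{Q}$, not only $Q\subset\mathbb{N}$ --- your argument handles the general case verbatim, since nothing in it uses integrality of $Q$.
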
 

\begin{proof} 
Assume that $N$ is normally generated by words $w_1 , \ldots , w_k \in \mathsf{F}$. 
Let $m > \mathsf{max} (|w_1 | , \ldots , |w_k |)$. 
If a finite set $F \subset \mathsf{F}$ 
contains all words of length $\le m$ then $N$ is included into the kernel of every homomorphism $\varphi$ from $\mathsf{F}$ onto a finite group $H$ such that the corresponding map $fN \to \varphi (f)$, $f\in F$, is a partial isomorphism $\mathsf{F}/N\to H$ defined on  $\{ fN | f\in F\}$. 
In particular such a $\varphi$ 
induces a homomorphism $G =\mathsf{F}/N\to H$. 
We now see that the condition of Theorem \ref{mlef} implies that each 
$B^{G}_n (1)$ is closed in the profinite topology of $G$. 
By Theorem \ref{mRF} we obtain the conclusion of this corollary. 
\end{proof}

\subsection{Connections with metric weak soficity} 

As we already know metrically LEF groups are metrically weakly sofic. 
{\em When does the converse hold?} 
In order to answer this question we apply a kind of flexible stability, 
\cite{BB}. 
Note that a metric group $(G,d)$ with $d\le 1$ is  metrically weakly sofic if and only if it isometrically embeds into a metric ultraproduct of finite metric groups. 
In the following definition we give additional restrictions on such an embedding.  

\begin{definition} \label{LEFstable} 
{\em We say that a normed group $(G, \ell )$ is {\em LEF (flexibly) stable} if for every finite subset $F \subset G$ there is a function 
$f_F : [0,+\infty ) \rightarrow [0,+\infty )$ with $\lim_{x \to 0} f(x) = 0$ such that for all 
$\varepsilon$, $(C, \ell_C )$ and $\phi$ satisfying the conditions of Definition \ref{DefMetrAppr} there is a finite normed group $(D, \ell_D )$ and an injective map 
$\psi: F\rightarrow D$ such that 
any triple $h,g,hg\in F$ satisfies 
$\psi (hg )=\psi (h) \psi (g)$ and 
any $g\in F$ satisfies 
$| \ell (g) - \ell_D (\psi (g))| <f_F (\varepsilon )$. 
}
\end{definition}

The following statement is straightforward. 

\begin{proposition} 
A LEF stable metrically weakly sofic group is metrically LEF. 
\end{proposition}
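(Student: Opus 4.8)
The plan is to unwind the two hypotheses and check directly that $(G,\ell)$ satisfies Definition \ref{mLEF} for the class $\mathcal{F}$ of finite invariantly normed groups. Fix a finite $F\subseteq G$ and a finite $Q\subset\Lambda\cap\mathbb{Q}$ with $0\in Q$; we must produce a finite normed $(D,\ell_D)$ together with an $F$-$Q$-almost-homomorphism $\psi:(G,\ell)\to(D,\ell_D)$. Since $(G,\ell)$ is metrically weakly sofic, for each $\varepsilon>0$ there is a finite normed group $(C,\ell_C)$ and an injective $\phi:F\to C$ satisfying the three bullets of Definition \ref{DefMetrAppr}: approximate multiplicativity on triples in $F$, $\ell_C(\phi(1))<\varepsilon$ when $1\in F$, and $|\ell(g)-\ell_C(\phi(g))|<\varepsilon$ for all $g\in F$. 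LEF stability now applies: it hands us, from the fixed function $f_F$ with $\lim_{x\to 0}f_F(x)=0$, a finite normed group $(D,\ell_D)$ and an injective $\psi:F\to D$ with $\psi(hg)=\psi(h)\psi(g)$ for every triple $h,g,hg\in F$ and $|\ell(g)-\ell_D(\psi(g))|<f_F(\varepsilon)$ for every $g\in F$.

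The remaining point is purely quantitative: convert the $\varepsilon$-level error bound $|\ell(g)-\ell_D(\psi(g))|<f_F(\varepsilon)$ into the exact comparisons required by a $D$-$Q$-almost-homomorphism, namely $\ell(g)\,\square\,q\Leftrightarrow\ell_D(\psi(g))\,\square\,q$ for all $g\in F$, all $q\in Q$, and all $\square\in\{<,>,=\}$. First I would handle the $=$ case: for $g\in F$ and $q\in Q$, if $\ell(g)=q$ we need $\ell_D(\psi(g))=q$. This is where I must be slightly careful — a bare norm estimate gives $\ell_D(\psi(g))$ close to $q$, not equal to it. The clean way around this is to invoke Remark \ref{norm-eq} together with Proposition \ref{RvsN} exactly as in the proof of Theorem \ref{mlef}: since the only constraints are finitely many, the value set can be adjusted so that the comparisons with the finitely many rationals in $Q$ become exact once $\varepsilon$ is small enough. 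Concretely, let $\delta$ be the minimum over $g\in F$ and $q\in Q$ of the nonzero quantities $|\ell(g)-q|$, together with the minimal positive gap between distinct values of $\ell$ on $F$; choose $\varepsilon$ so small that $f_F(\varepsilon)<\delta/3$. Then for each $g\in F$ and $q\in Q$ with $\ell(g)\neq q$ we get $\ell_D(\psi(g))$ on the same side of $q$ as $\ell(g)$, which yields the $<$ and $>$ equivalences immediately, and forces the $=$ equivalence by elimination.

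The one genuine obstacle is the exact equality $\ell(g)=q\Rightarrow\ell_D(\psi(g))=q$ when $\Lambda$ is a continuum: flexible stability as stated only controls $|\ell(g)-\ell_D(\psi(g))|$, so $\ell_D(\psi(g))$ need not land exactly on $q$. I would resolve this by the same re-scaling device used in Proposition \ref{RvsN} — replace $\ell_D$ by a norm $\ell_D'$ on $D$ that agrees with the required finitely many target values on $\psi(F)$ while still satisfying the triangle inequality and invariance — or, alternatively, observe that when the relevant values lie in $\mathbb{Q}$ (in particular the integer-valued word-norm case, which is the paper's main interest) Remark \ref{norm-eq} makes the equalities automatic for small $\varepsilon$, since then $\ell_D(\psi(g))$ within $f_F(\varepsilon)<\tfrac12$ of an integer is that integer. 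With injectivity of $\psi$ on $F$ already supplied by Definition \ref{LEFstable}, this completes the verification that $(G,\ell)$ is metrically LEF. $\blacksquare$
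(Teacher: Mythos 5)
The paper itself offers no argument here beyond declaring the statement straightforward, and your unwinding is exactly the intended one: metric weak soficity produces $\varepsilon$-approximations $\phi:F\to C$, LEF stability converts them into injective, exactly multiplicative maps $\psi:F\to D$ with $|\ell(g)-\ell_D(\psi(g))|<f_F(\varepsilon)$, and taking $\varepsilon$ small compared with the nonzero gaps between $\{\ell(g):g\in F\}$ and $Q$ settles all comparisons with $\square\in\{<,>\}$ (after extending $\psi$ arbitrarily off $F$, which costs nothing). You also correctly isolate the one delicate point, namely that Definition \ref{mLEFm} demands the exact implication $\ell(g)=q\Rightarrow\ell_D(\psi(g))=q$, which an approximate norm estimate cannot give by itself.

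Your two proposed fixes for that point, however, do not yet close it. The claim that in the integer-valued case ``$\ell_D(\psi(g))$ within $f_F(\varepsilon)<\tfrac12$ of an integer is that integer'' presupposes that $\ell_D$ is integer-valued, which Definition \ref{LEFstable} does not provide: the finite group supplied by stability carries an arbitrary invariant norm (in Proposition \ref{f_g_s} it is the group $C$ of Definition \ref{DefMetrAppr} itself). Nor can you simply quote Proposition \ref{RvsN} to repair this: its rounding is a ceiling, so when $\ell_D(\psi(g))$ lies slightly above the integer target $\ell(g)$ the rounded value is $\ell(g)+1$, while nearest-integer rounding destroys the triangle inequality and (i'). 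A correct repair in the integer-valued case is to rescale first: replace $\ell_D$ by $(1-\theta)\ell_D$ with $\theta$ of the order of $f_F(\varepsilon)$ (still an invariant norm), so that for $g\in F\setminus\{1\}$ the value falls into $(\ell(g)-1,\ell(g)]$, and then apply the ceiling construction of Proposition \ref{RvsN}; together with $\psi(1)=1$ (forced by multiplicativity when $1\in F$) this gives exact equality $\ell_D'(\psi(g))=\ell(g)$ on $F$ and hence all $Q$-comparisons. For general $\Lambda$ your alternative — replace $\ell_D$ by some $\ell_D'$ agreeing with the finitely many target values on $\psi(F)$ — is a statement of what is needed rather than a construction; note that rounding upward onto an arbitrary finite value set does not preserve subadditivity (the argument for $\mathbb{N}$ uses that $\mathbb{N}$ is closed under addition), so this step genuinely requires an argument or the proposition should be asserted under the rational/integer-valued hypothesis, which is the only case the paper actually uses.
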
  

The fact that $(\mathsf{F}_2 , \parallel \cdot \parallel )$ is not metrically weakly sofic (see Corollary \ref{Fnmws}) 
motivates the question about LEF-stability of it. 
In the following proposition $\mathsf{n}$ can be infinite (recall notations of Section 3.1).

\begin{proposition} \label{f_g_s}  
The free group $(\mathsf{F_n}, \parallel \cdot \parallel )$ is LEF stable.   
\end{proposition}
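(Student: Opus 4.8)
\textbf{Plan for the proof of Proposition \ref{f_g_s}.}
The plan is to exploit the fact that in $\mathsf{F} = \mathsf{F_n}$ the invariant word norm $\parallel \cdot \parallel$ is integer-valued and, for a fixed finite $F$, completely determined by a finite amount of combinatorial data. First I would fix a finite $F \subset \mathsf{F}$ and, enlarging $F$, assume it is closed under taking subwords, contains the free generators $x_i^{\pm 1}$ that appear in elements of $F$, and contains a representative of each conjugacy class of $S^{\mathsf{F}}$ meeting the balls involved. Set $N_0 = \mathsf{max}\{\parallel g \parallel : g \in F\}$ and $n = $ (a bound larger than the total word length needed to realize all products of at most $N_0$ conjugates of length-$\le (\mathsf{max}\{|g| : g \in F\})$ elements, together with $\mathsf{max}\{|g| : g \in F\}$). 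The point of $n$ is exactly as in the proof of Theorem \ref{mws}: any relation witnessing $\parallel fN \parallel_N \le m$ can, after the free group has no relations, be ``read off'' inside the ball $B^{\mathsf{F},|\cdot|}_n(1)$.

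Next, given $\varepsilon$, a finite normed group $(C,\ell_C)$, and $\phi : F \to C$ satisfying Definition \ref{DefMetrAppr} (with $F$ in place of the finite subset and the norm $\parallel\cdot\parallel$), I would form the homomorphism $\tilde\phi : \mathsf{F} \to C$ determined by $\phi$ on the free basis. Exactly as in the necessity part of Theorem \ref{mws}, one checks that $\ell_C(\tilde\phi(v)^{-1}\phi(v)) < 2n\varepsilon$ for all $v \in B^{\mathsf{F},|\cdot|}_{\le n}(1)$ and that $\ell_C(\tilde\phi(g))$ is within $(2n+1)\varepsilon$ of $\parallel g \parallel$ for the relevant generators and short words. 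Now I want to replace $C$ by a genuine finite quotient of $\mathsf{F}$ realizing the word norm exactly on $F$. The idea is: since $\mathsf{F}$ is residually finite and LEF, and since $\tilde\phi$ already separates the (finitely many) elements of $B^{\mathsf{F},|\cdot|}_n(1)$ up to the metric data, choose a finite-index normal subgroup $H \triangleleft \mathsf{F}$ contained in $\ker\tilde\phi$ such that the quotient map $\mathsf{F} \to \mathsf{F}/H$ is injective on $B^{\mathsf{F},|\cdot|}_n(1)$; this is possible because $\mathsf{F}$ is residually finite and $\ker\tilde\phi$ has finite index. Put $D = \mathsf{F}/H$ and let $\psi : F \to D$ be induced by $fN \mapsto fH$ (well-defined on $\{fN : f \in F\}$ since the chosen $H$ separates the finitely many cosets represented by $F$; note $N \subseteq H$ is NOT required, but the map on representatives is injective so we may regard $\psi$ as defined on $F$). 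Equip $D$ with the invariant word norm $\ell_D$ generated by the cosets $B^{\mathsf{F},\parallel\cdot\parallel}_1(1)H$, extended by a large constant $\hat m > [\mathsf{F}:H]$ on any element outside $\langle B^{\mathsf{F},\parallel\cdot\parallel}_1(1)H\rangle$. Then $\psi$ is an honest partial homomorphism on $F$ (any relation $hg = hg$ among elements of $F$ holds in $\mathsf{F}$, hence in $D$), and by construction $\ell_D(\psi(f)) \le \parallel f \parallel$ for every $f \in F$ since a shortest $\overline{S}$-expression of $fN$ lifts to an expression of $fH$ of the same length.

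The main obstacle — and the place the argument must work, not just quote Theorem \ref{mws} — is showing the reverse inequality $\ell_D(\psi(f)) \ge \parallel f \parallel$, i.e. that passing to the finite quotient $D$ does not shorten norms for elements of $F$. This is where I need to choose $H$ more carefully than merely separating $B^{\mathsf{F},|\cdot|}_n(1)$. The key reduction is: $\ell_D(fH) < \parallel fN\parallel_N = \parallel f\parallel$ (the last equality holding because $N \subseteq \ker\tilde\phi$ forces the relevant equalities, or rather one works directly with $\parallel f \parallel$ in $\mathsf{F}$ when $N=\{1\}$) would mean $fH \in (B^{\mathsf{F},\parallel\cdot\parallel}_1(1)H)^m$ for some $m < \parallel f\parallel$, i.e. $f \in B^{\mathsf{F},\parallel\cdot\parallel}_1(1)^m \cdot H$. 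So I must pick $H$ of finite index in $\mathsf{F}$, contained in $\ker\tilde\phi$, with the additional separation property that for each $f \in F$ and each $m < \parallel f\parallel$, $fH \cap B^{\mathsf{F},\parallel\cdot\parallel}_1(1)^m = \emptyset$. Since there are only finitely many pairs $(f,m)$ and, for each, $f \notin B^{\mathsf{F},\parallel\cdot\parallel}_1(1)^m$ (a fixed element outside a fixed closed-under-inverse subset), and since $\mathsf{F}$ is LEF, each such inequality can be preserved in some finite quotient $\mathsf{F}/H_{f,m}$; then take $H = \ker\tilde\phi \cap \bigcap_{f,m} H_{f,m}$. Here I emphasize that LEF (not residual finiteness) is what is available and what is needed: $B^{\mathsf{F},\parallel\cdot\parallel}_1(1)^m$ need not be closed in the profinite topology (that is precisely the content of Corollary \ref{Fnmws}), so I cannot separate $f$ from the whole set $B^{\mathsf{F},\parallel\cdot\parallel}_1(1)^m$; but I only need to separate $f$ from the finitely many elements of $B^{\mathsf{F},\parallel\cdot\parallel}_1(1)^m \cap B^{\mathsf{F},|\cdot|}_n(1)$, a finite set not containing $f$, and LEF of $\mathsf{F}$ (together with residual finiteness for the separation) does exactly that. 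With such $H$ in hand, $\ell_D(\psi(f)) = \parallel f\parallel$ exactly for all $f \in F$, so in fact $|\parallel f \parallel - \ell_D(\psi(f))| = 0$, and we may take $f_F \equiv 0$ — a fortiori $\lim_{x\to 0} f_F(x) = 0$. Thus $(\mathsf{F_n},\parallel\cdot\parallel)$ is LEF stable, and indeed the stability function is identically zero, which is the strongest possible form.
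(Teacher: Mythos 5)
Your construction breaks down at exactly the step you flag as the main obstacle, and the failure is not repairable: the claimed conclusion is false. You want a finite-index $H\triangleleft\mathsf{F}$ with $fH\cap (B^{\mathsf{F},\parallel\cdot\parallel}_1(1))^m=\emptyset$ for each $f\in F$ and $m<\parallel f\parallel$, and you argue this only requires separating $f$ from the \emph{finite} set $(B^{\mathsf{F},\parallel\cdot\parallel}_1(1))^m\cap B^{\mathsf{F},|\cdot|}_n(1)$. That reduction is wrong: $(B^{\mathsf{F},\parallel\cdot\parallel}_1(1))^m$ contains conjugates $g^u$ with arbitrarily long conjugators $u$, i.e. elements of unbounded $|\cdot|$-length, and nothing prevents such elements from lying in the coset $fH$ even when $fH$ avoids the truncated set. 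In fact the coset $fH$ meets $(B^{\mathsf{F},\parallel\cdot\parallel}_1(1))^m$ if and only if $fH$ lies in the $m$-ball of the quotient word norm on $\mathsf{F}/H$ generated by the image of $\overline{S}$, so your requirement is precisely that $f$ can be separated from the \emph{whole} set $(B^{\mathsf{F},\parallel\cdot\parallel}_1(1))^m$ by a finite quotient. By the Gismatullin--Segal observation (via Nikolov--Segal) quoted in Corollary \ref{Fnmws}, some power $(B^{\mathsf{F},\parallel\cdot\parallel}_1(1))^m$ is not closed in the profinite topology of $\mathsf{F}_2$, so there are $f\notin (B^{\mathsf{F},\parallel\cdot\parallel}_1(1))^m$ for which \emph{every} finite-index $H$ fails this, and then $\ell_D(fH)\le m<\parallel f\parallel$: passing to a finite quotient can and sometimes must shorten the invariant word norm. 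Your final claim that one may take $f_F\equiv 0$ would give, for every finite $F$ (taking any injection into a finite normed group with a huge $\varepsilon$, which always satisfies Definition \ref{DefMetrAppr}), an exact norm-preserving partial homomorphism into a finite normed group, i.e. metric LEF of $(\mathsf{F}_2,\parallel\cdot\parallel)$ by Remark \ref{norm-eq} --- contradicting Corollary \ref{F_2not}. The appeal to ``LEF rather than residual finiteness'' does not help; free groups are residually finite, and the obstruction is non-closedness of the balls, not separating points.

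The paper's proof is of a different and much simpler shape, and it is worth seeing why it avoids your difficulty: it never builds a new finite quotient $D$ at all. Given $\phi:F\to C$ as in Definition \ref{DefMetrAppr}, one uses freeness of $\mathsf{F_n}$ to extend the values of $\phi$ on the relevant basis elements $\bar b$ to a genuine homomorphism $\psi:\mathsf{F_n}\to C$ into the \emph{same} finite normed group, then shows by induction on word length, using invariance of $d_C$, that $d_C(\phi(v),\psi(v))<2|v|\varepsilon$ for $v\in F$, whence $|\parallel v\parallel-\ell_C(\psi(v))|<3k\varepsilon$ where $F\subseteq B_k(1)$. Thus the stability function is $f_F(x)=3kx$, not $0$: the exact multiplicativity comes for free from freeness, and only an $O(k\varepsilon)$ norm error is (and can be) guaranteed. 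You should rework your argument along these lines rather than trying to realize the word norm exactly in a finite quotient.
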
 

{\em Proof.} 
Let us fix a free base of $\mathsf{F_n}$ (which can be infinite).  
Let $F\subset \mathsf{F_n}$ be finite. 
Find $\bar{b}$, the minimal initial segment of the basis such that $F$ belongs to some ball $B_k (1)$ with respect to $\bar{b}$. 
Take the minimal $k$ with this property, i.e. $\mathsf{diam}(F) = 2k$ with respect to $\bar{b}$.  
We will prove that the function $f_F (x) = 3k \cdot x$ satisfies Definition \ref{LEFstable}. 
We may assume that $\bar{b} \subseteq F$ and $F$ is closed under taking subwords. 

Let $\varepsilon >0$ and 
$\phi : F \rightarrow C$ 
be a map into a finite metric group 
$(C , \ell_C )$ 
as in Definition \ref{DefMetrAppr}. 
We claim that the group $(C, \ell_C )$ can be taken as $(D,\ell_D )$ in Definition \ref{LEFstable} where $f_F$ is as above. 
Indeed, since $\mathsf{F_n}$ is free, the map 
$\bar{b} \rightarrow \phi(\bar{b})$ extends to a homomorphism 
$\mathsf{F_n} \rightarrow C$. 
We call it $\psi$. 
When $b_i$ and $b_j$ are taken from $\bar{b}$ then 
\[ 
\psi (b_i b_j ) = \psi (b_i ) \psi (b_j ) = \phi (b_i ) \phi (b_j ) 
\] 
and $d_C (\psi (b_i b_j ), \phi (b_i b_j )) \le \varepsilon$ (assuming that $b_i b_j \in F$). 
One can similarly show that $d_C (\phi (b_i^{-1} ), \psi (b_i^{-1})) \le \varepsilon$. 
Using invariantness of $d_C$ and properties of $\phi$ we obtain 
\[ 
d_C(\phi (b^{\epsilon_1}_i b^{\epsilon_2}_j ) ,\psi (b^{\epsilon_1}_i b^{\epsilon_2}_j ) )  = 
d_C (\phi ( b^{\epsilon_1}_i b^{\epsilon_2}_j ),\psi (b_i)^{\epsilon_1}  \psi (b_j )^{\epsilon_2}) 
\le 3\varepsilon ,
\] 
Applying this argument inductively we obtain that when $v\in F$ and $|v| = t$ then 
\[ 
d_C(\phi (v ) ,\psi (v ) )  < 2t \cdot \varepsilon ,
\] 
By  the triangle inequality: 
\[ 
|\ell_C (\phi (v )) - \ell_C (\psi (v ))| < 2t \cdot \varepsilon .
\] 
On the other hand 
$| \parallel v\parallel - \, \ell_C (\phi (v ) | < \varepsilon$.  
In particular, 
\[ 
| \parallel v \parallel - \, \ell_C (\psi (v) | < 3 t \cdot \varepsilon \le 3k \cdot \varepsilon.
\] 
$\Box$ 

\bigskip 

We finish this section by the following remark. 

\begin{remark} 
{\em It is proved in Lemma 5 of \cite{NST} that connected abelian Lie groups with the ``euclidean" length function are metrically weakly sofic. 
The proof given in that paper shows that 
\begin{itemize} 
\item for every $m$ and $n \in \omega$ the Lie group $\mathbb{R}^m \times (\mathbb{R}/\mathbb{Z})^n$ equipped with the ``euclidean" metric is metrically LEF. 
\end{itemize}
}
\end{remark}

\subsection{Further connections with metric RF}

Proposition \ref{f_g_s} and Corollary \ref{LEF-RF} suggest that  Corollary \ref{Fnmws} can be deduced by proving that $( \mathsf{F}_2, \parallel \cdot \parallel )$  is not RF. 
In this section we discuss this way of argumentation in a slightly more general situation. 

Assume that $w(z_1,z_2,\ldots, z_m )$ is a group word and $V_w$ be the variety of groups defined by this word as an identity. 
By $\mathsf{F}^w_n$ we denote the free $n$-generated group of $V_w$. 
We will say that the word $w(\bar{z})$ is {\em quasi-linear} if for any group $H$ and its generating set $A$ the group $H$ belongs to $V_w$ if and only if $w(\bar{a})=1$ for each $n$-tuple $\bar{a}$ from $A$.  
Note that the trivial word is quasi-linear. 
Any multi-linear word (or an outer commutator identity) is quasi-linear too. 
In particular the variey of $\ell$-step nilpotent groups has the form $V_w$ for a quasi-linear $w(\bar{z})$. 

When $\mathsf{F}^w_{\mathsf{n}}$ is a free group of a group variety with a free base  
$x_0 , \ldots , x_i ,\ldots$, $i< \mathsf{n}$, where $\mathsf{n} \in \mathbb{N} \cup \{ \omega\}$, we preserve the notation above, i.e. we will assume that the set 
\[ 
S=\{ x^{\pm 1}_0 , \ldots , x^{\pm 1}_i ,\ldots \, | \,  i< \mathsf{n} \}. 
\]
generates the word metric $\parallel \cdot \parallel$. 

\begin{proposition} \label{f_g_1} 
Let $w(\bar{z})$ be a quasi-linear word and $\mathcal{C}$ be a class of normed groups such that their norms are integer valued and $\mathcal{C}$ is closed under subgroups. 

Then the free group $(\mathsf{F}^w_{\mathsf{n}} , \parallel \cdot \parallel )$ 
(where $\mathsf{n}$ is greater than $1$) is metrically LE$\mathcal{C}$ if and only if it is metrically fully residually $\mathcal{C}$.    
\end{proposition}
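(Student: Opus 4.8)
We first dispose of the direction ``metrically fully residually $\mathcal{C}\Rightarrow$ metrically LE$\mathcal{C}$'': it is immediate, since a metric homomorphism which happens to be a $D$-$Q$-almost-homomorphism is, in particular, a $D$-$Q$-almost-homomorphism. All the work is in the converse. The plan is: starting from a $D'$-$Q'$-almost-homomorphism $\varphi_0$ into some $(C_0,\ell_{C_0})\in\mathcal{C}$ (for suitably enlarged $D',Q'$), pass to the subgroup $C=\langle\varphi_0(\bar{b})\rangle\le C_0$ generated by the image of the finite part $\bar{b}$ of the free basis relevant to $D$; use quasi-linearity of $w$ to conclude $C\in V_w$; invoke the universal property of $\mathsf{F}^w_{\mathsf{n}}$ to extend $x_i\mapsto\varphi_0(x_i)$ for $x_i\in\bar{b}$ (and the remaining basis elements $\mapsto 1$) to a genuine homomorphism $\psi:\mathsf{F}^w_{\mathsf{n}}\to C$; and finally observe that, since $\parallel\cdot\parallel$ is a word norm, $\psi$ is automatically a metric homomorphism.

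In detail, fix a finite $D\subseteq\mathsf{F}^w_{\mathsf{n}}$ and a finite $Q\subset\mathbb{Q}\cap\Lambda$ with $0\in Q$, and a finite initial segment $\bar{b}$ of the free basis with $D\subseteq\langle\bar{b}\rangle$. I would enlarge $D$ to a finite set $D'$ containing $\bar{b}$, the inverses of its members and $1$, and closed under taking the values in $\mathsf{F}^w_{\mathsf{n}}$ of all prefixes of a fixed $\bar{b}^{\pm1}$-word for each $g\in D$ and of the words $w(x_{i_1},\dots,x_{i_m})$ over all tuples of elements of $\bar{b}$ of the arity of $w$; each of the latter words is trivial in $\mathsf{F}^w_{\mathsf{n}}$, as $w=1$ is an identity of $V_w$. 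Put $Q'=Q\cup\{0,1\}$. Applying metric LE$\mathcal{C}$ to $(D',Q')$ gives $(C_0,\ell_{C_0})\in\mathcal{C}$ and a $D'$-$Q'$-almost-homomorphism $\varphi_0:\mathsf{F}^w_{\mathsf{n}}\to C_0$. Since $1\in D'$, $0\in Q'$ and $\ell_{C_0}$ is a length function, $\varphi_0(1)=1$; and since $D'$ is prefix-closed, an induction on word length using the triple condition shows that $\varphi_0$ is multiplicative along every $\bar{b}^{\pm1}$-word all of whose prefixes lie in $D'$. Hence $\varphi_0(g)$ equals the evaluation of the chosen word for $g$ at $\varphi_0(\bar{b})$ for every $g\in D$, and $w(\varphi_0(x_{i_1}),\dots,\varphi_0(x_{i_m}))=\varphi_0\big(w(x_{i_1},\dots,x_{i_m})\big)=\varphi_0(1)=1$ for every relevant tuple from $\bar{b}$.

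Now set $C=\langle\varphi_0(\bar{b})\rangle\le C_0$ with $\ell_C:=\ell_{C_0}|_C$; by closure of $\mathcal{C}$ under subgroups and Lemma \ref{corr1}, $(C,\ell_C)\in\mathcal{C}$, and $\ell_C$ remains an invariant, integer-valued norm. Since $\varphi_0(\bar{b})$ is a generating set of $C$ on which all the relations $w(\bar{a})=1$ hold, quasi-linearity of $w$ gives $C\in V_w$, so freeness of $\mathsf{F}^w_{\mathsf{n}}$ yields a homomorphism $\psi:\mathsf{F}^w_{\mathsf{n}}\to C$ with $\psi(x_i)=\varphi_0(x_i)$ for $x_i\in\bar{b}$ and $\psi(x_i)=1$ otherwise. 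Since $\psi$ is a homomorphism agreeing with $\varphi_0$ on $\bar{b}$, the previous paragraph yields $\psi(g)=\varphi_0(g)$ for all $g\in D$; hence $\psi$ is injective on $D$, the triple condition holds automatically, and for $g\in D$ and $q\in Q\subseteq Q'$ the equivalence $\parallel g\parallel\,\square\,q\Leftrightarrow\ell_C(\psi(g))\,\square\,q$ is inherited from $\varphi_0$, so $\psi$ is a $D$-$Q$-almost-homomorphism. It is moreover metric: $\ell_C(\psi(x_i))\le1$ for every basis element (either $\psi(x_i)=1$, or $x_i\in\bar{b}$, and then since $\parallel x_i\parallel\not>1$ the ``$>$''-instance of the almost-homomorphism condition at $q=1$ forces $\ell_C(\psi(x_i))\le1$); so writing any $h$ with $\parallel h\parallel=k$ as a product of $k$ elements of $\overline{S}$ — i.e.\ of conjugates of basis elements — and combining the triangle inequality with the invariance of $\ell_C$ gives $\ell_C(\psi(h))\le k=\parallel h\parallel$. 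Thus $\psi$ witnesses metric full residuality for $(D,Q)$.

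The step I expect to be the crux is the passage to $C=\langle\varphi_0(\bar{b})\rangle$ together with the observation that quasi-linearity of $w$ is exactly what is needed there: it guarantees that this generated subgroup of $C_0$ still lies in $V_w$ — which fails for a general word, since the relations of $\mathsf{F}^w_{\mathsf{n}}$ need not pass to subgroups of $C_0$ — and only then can one use the universal property of the relatively free group to turn the almost-homomorphism into an honest one. Enlarging $D$ to a prefix-closed set large enough that $\varphi_0$ already detects the identities $w(\bar{x})=1$, and checking that $\psi$ is a metric homomorphism, are both routine.
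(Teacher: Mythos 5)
Your proposal is correct and follows essentially the same route as the paper's proof: enlarge the finite set so that the almost-homomorphism is multiplicative enough to witness $w=1$ on (the images of) the relevant generators, use quasi-linearity plus closure of $\mathcal{C}$ under subgroups to place the generated subgroup in $V_w$, extend to a genuine homomorphism by relative freeness while sending the unused basis elements to $1$, and conclude it is metric from norm-$1$ images of generators, invariance and the triangle inequality. Your prefix-closure bookkeeping replaces the paper's closure under sub-terms of $w$, but this is only a cosmetic difference.
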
 

\begin{proof} 
Sufficiency is obvious. 
Let us prove necessity. 
Let $D$ be a finite subset of $(\mathsf{F}^w_{\mathsf{n}} , \parallel \cdot \parallel )$ and $Q$ be a finite subset of $\mathbb{N}$ with $0\in Q$. 
Let $\varphi :(\mathsf{F}^w_{\mathsf{n}} , \parallel \cdot \parallel )\rightarrow (H, \ell ) \in \mathcal{C}$ be as in  Definition  \ref{mLEF}. 
We may assume that there is $D_0 \subseteq D$ such that $S \cap D = D_0$, $D\subseteq \langle D_0 \rangle$ and for each sub-term $t(\bar{z})$ of $w(\bar{z})$ all values of the form $t(\bar{d})$ with $\bar{d}$ from $D_0$ belong to $D$.   
By quasi-linearity of $w(\bar{z})$ the set $\varphi (D  )$ generates a subgroup of $H$ which belongs to $V_w$. 
In particular, we can simultaneously extend $\varphi$ and $S \setminus D_0 \to 1$ to a homomorphism into $H$, say $\hat{\varphi}$.  
Taking a subgroup if necessary we assume that $\hat{\varphi}$ is surjective. 

Since $S \setminus \mathsf{Ker} \hat{\varphi}\subseteq D$, $\hat{\varphi} (S)\subseteq B^H_1 (1)$. 
The latter implies $\hat{\varphi} (\overline{S}) \subseteq B^H_1 (1)$ by invariantness of $\ell$. 
Now 
\[ 
\hat{\varphi} (B^{\mathsf{F}^w_{\mathsf{n}}}_m (1)) = \hat{\varphi} (\overline{S}^m) = 
\hat{\varphi} (\overline{S})^m \subseteq B^H_m (1). 
\] 
Indeed, the latter inclusion follows from the triangle inequality.  
We see that $\ell (\hat{\varphi}(g)) \le \parallel g \parallel$ for any $g\in \mathsf{F}^w_{\mathsf{n}}$. 
The rest is clear. 
\end{proof}  

We now repeat the argument of Corollary \ref{Fnmws}.  
This gives a slightly weaker statement, but in a different way. 

\begin{corollary} \label{F_2not}
The free group $(\mathsf{F}_2, \parallel \cdot \parallel )$  is not metrically LEF. 
\end{corollary}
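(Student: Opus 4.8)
The plan is to derive Corollary \ref{F_2not} from the machinery already developed, in particular Proposition \ref{f_g_1} together with Theorem \ref{mRF} and the fact (used in the proof of Corollary \ref{Fnmws}) that some power $(B^{\mathsf{F},\parallel\cdot\parallel}_1(1))^n$ fails to be closed in the profinite topology of $\mathsf{F}_2$. The trivial word $w$ is quasi-linear, and $V_w$ is the variety of all groups, so $\mathsf{F}^w_2 = \mathsf{F}_2$; moreover the class $\mathcal{F}$ of all finite invariantly normed groups is closed under subgroups and consists of integer-valued normed groups (the relevant norms here are word norms, hence integer valued, and one may invoke Proposition \ref{RvsN}/Remark \ref{norm-eq} to arrange this). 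Thus Proposition \ref{f_g_1} applies with $\mathcal{C}=\mathcal{F}$.

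First I would observe that if $(\mathsf{F}_2,\parallel\cdot\parallel)$ were metrically LEF, i.e.\ metrically LE$\mathcal{F}$, then by Proposition \ref{f_g_1} it would be metrically fully residually $\mathcal{F}$, which is precisely the property ``metric RF'' in the terminology of Section 2.3. Next I would apply Theorem \ref{mRF} (with $N=\{1\}$, so $G=\mathsf{F}_2$): metric RF of $(\mathsf{F}_2,\parallel\cdot\parallel)$ is condition (2), which implies condition (3), namely that every ball $B^{\mathsf{F}_2}_m(1)=\{g\mid \parallel g\parallel\le m\}$ is closed in the profinite topology of $\mathsf{F}_2$. But $B^{\mathsf{F}_2}_m(1)$ equals $(B^{\mathsf{F},\parallel\cdot\parallel}_1(1))^m$ as a subset of $\mathsf{F}_2$, so this would say every such power is profinitely closed, contradicting the Segal--Gismatullin observation (a consequence of the Nikolov--Segal theorem \cite{NS}) invoked in the proof of Corollary \ref{Fnmws}. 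This contradiction finishes the argument.

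One small technical point to check carefully is the compatibility of norm conventions: Proposition \ref{f_g_1} is stated for classes $\mathcal{C}$ whose norms are integer valued, whereas the definition of metric LEF uses arbitrary finite invariantly normed groups. Here $\ell_G=\parallel\cdot\parallel$ is integer valued, so by Remark \ref{norm-eq} and Proposition \ref{RvsN} one may replace, without loss, the target finite normed groups by integer-normed ones; I would state this reduction explicitly at the start of the proof. Similarly I would note that ``metrically fully residually $\mathcal{F}$'' is literally the definition of metric RF, so no further translation is needed before invoking Theorem \ref{mRF}.

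I do not expect a genuine obstacle here — the statement is weaker than Corollary \ref{Fnmws} and the corollary is explicitly flagged in the text as following by ``the argument of Corollary \ref{Fnmws}'' through a different route. The only place demanding a little care is making sure the chain of implications $(2)\Rightarrow(3)$ in Theorem \ref{mRF} and the identification $B^{\mathsf{F}_2}_m(1)=(B^{\mathsf{F},\parallel\cdot\parallel}_1(1))^m$ are used in the correct direction, and that the non-closedness input is the same $n$-th power statement quoted in Corollary \ref{Fnmws}. Thus the write-up is essentially: reduce to integer norms; apply Proposition \ref{f_g_1} to get metric RF; apply Theorem \ref{mRF} to get closedness of all balls; contradict Nikolov--Segal.
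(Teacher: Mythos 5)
Your proposal is correct and follows the paper's own route: the paper's proof of this corollary is exactly the chain Proposition \ref{f_g_1} (with the trivial, quasi-linear word and finite integer-normed groups) $\Rightarrow$ metric RF, then Theorem \ref{mRF} $\Rightarrow$ closedness of the balls, contradicted by the Segal--Gismatullin/Nikolov--Segal non-closedness of a power of $\overline{S}$. You have merely spelled out the integer-norm reduction via Remark \ref{norm-eq} and Proposition \ref{RvsN}, which the paper leaves implicit.
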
 

\begin{proof} 
Let $S= \{ x^{\pm 1}_1 , x^{\pm 1}_2 \}$. 
Note that $(\mathsf{F}_2, \parallel \cdot \parallel )$  is not metrically LEF. 
Indeed, use Proposition \ref{f_g_1},  Theorem \ref{mRF} and an observation of D. Segal and J. Gismatullin 
there is a natural number $n$ such that $\overline{S}^n$ is not closed in the profinite topology. 
\end{proof}

Applying Proposition \ref{f_g_s} and  Corollary \ref{F_2not} we now obtain the following corollary.  

\begin{corollary} 
The metric group $(\mathsf{F}_2 ,\parallel \cdot \parallel )$ is not metrically weakly sofic.  
\end{corollary}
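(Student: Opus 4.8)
The plan is to derive this corollary by combining the two preceding results in the most direct way possible. By Proposition~\ref{f_g_s} the normed group $(\mathsf{F}_2 , \parallel \cdot \parallel )$ is LEF stable, and by the Proposition immediately before Proposition~\ref{f_g_s}, any LEF stable metrically weakly sofic group is metrically LEF. Hence, if $(\mathsf{F}_2 , \parallel \cdot \parallel )$ were metrically weakly sofic, it would be metrically LEF, contradicting Corollary~\ref{F_2not}. So the entire argument is a one-line contrapositive: assume metric weak soficity, apply LEF stability to upgrade it to metric LEF, and cite Corollary~\ref{F_2not} for the contradiction.

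Concretely, I would write: Suppose for contradiction that $(\mathsf{F}_2 , \parallel \cdot \parallel )$ is metrically weakly sofic. By Proposition~\ref{f_g_s} it is LEF stable, so by the Proposition preceding it (``a LEF stable metrically weakly sofic group is metrically LEF'') the group $(\mathsf{F}_2 , \parallel \cdot \parallel )$ is metrically LEF. This contradicts Corollary~\ref{F_2not}, which asserts precisely that $(\mathsf{F}_2 , \parallel \cdot \parallel )$ is not metrically LEF. Therefore $(\mathsf{F}_2 , \parallel \cdot \parallel )$ is not metrically weakly sofic.

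There is essentially no obstacle here, since all the work has already been done: the genuinely hard input is the Nikolov--Segal theorem, which enters through the Segal--Gismatullin observation that some power $\overline{S}^n$ fails to be closed in the profinite topology, and that is already packaged inside Corollary~\ref{F_2not} (via Proposition~\ref{f_g_1} and Theorem~\ref{mRF}). The only thing worth a sentence of commentary is that this gives a second, independent route to Corollary~\ref{Fnmws}: the first route (Corollary~\ref{Fnmws} itself) went through Theorem~\ref{mws} and the non-closedness of $(B^{\mathsf{F},\parallel\cdot\parallel}_1(1))^n$ directly, whereas this route factors through the notion of metric LEF and flexible LEF-stability, showing that weak soficity and LEF coincide for the free group and then invoking the failure of the latter. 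I would note this parenthetically so the reader sees the corollary is not circular despite restating Corollary~\ref{Fnmws}.
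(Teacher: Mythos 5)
Your proposal is correct and is exactly the paper's argument: the corollary is obtained by combining Proposition~\ref{f_g_s} (LEF stability of $(\mathsf{F_n},\parallel\cdot\parallel)$), the proposition that a LEF stable metrically weakly sofic group is metrically LEF, and Corollary~\ref{F_2not}. Your parenthetical remark that this is a second route to Corollary~\ref{Fnmws}, with the Nikolov--Segal input entering through Corollary~\ref{F_2not} rather than through Theorem~\ref{mws}, is also precisely how the paper frames it.
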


\begin{remark}
{\em The following question looks very interesting. 
For $n>1$ consider $SL(n,\mathbb{Z})$ together with the word metric $\parallel \cdot \parallel$ associated with respect to the standard set of generators (i.e. $\{ 0,1\}$-transvections).  
{\em Is this group metrically LEF/weakly sofic? }
}
\end{remark}

{\bf Acknowledgment.}
The author is grateful to Oleg Bogopolski for the reference \cite{thomover}.


Institute of Computer Science, University of Opole, 

ul. Oleska 48, 45 - 052 Opole, Poland 

aleksander.iwanow@uni.opole.pl 

and


Department of Applied Mathematics, Silesian University of Technology, 

ul. Kaszubska 23, 44 -- 101 Gliwice, Poland 

Aleksander.Iwanow@polsl.pl

\end{document}